\newtheorem{prop}{Proposition}[section]
\newtheorem{lem}[prop]{Lemma}
\newtheorem{cor}[prop]{Corollary}
\newtheorem{thm}[prop]{Theorem}
\newtheorem{conj}[prop]{Conjecture}
\theoremstyle{definition}
\newtheorem{defi}[prop]{Definition}
\newtheorem{ex}[prop]{Example}
\numberwithin{equation}{section}
\def\Z{\mathbb{Z}}
\def\omb{\cellcolor{black!5}}
\def\H{\mathrm{H}}
\def\SMA{\mathrm{SMA}}
\def\MR{\mathrm{MR}}
\def\supp{\mathsf{supp}}
\def\skel{\mathsf{skel}}
\def\lcm{\mathrm{lcm}}
\def\B{\mathcal{B}}
\def\E{\mathcal{E}}
\begin{document}

\title{Rectangular Heffter arrays: a reduction theorem}

\author[Fiorenza Morini]{Fiorenza Morini}
\address{Dipartimento di Scienze Matematiche, Fisiche e Informatiche, Universit\`a di Parma,\\
Parco Area delle Scienze 53/A, 43124 Parma, Italy}
\email{fiorenza.morini@unipr.it}

\author[Marco Antonio Pellegrini]{Marco Antonio Pellegrini}
\address{Dipartimento di Matematica e Fisica, Universit\`a Cattolica del Sacro Cuore,\\
Via della Garzetta 48, 25133 Brescia, Italy}

\email{marcoantonio.pellegrini@unicatt.it}

\begin{abstract}
Let $m,n,s,k$ be four integers such that $3\leq s \leq n$, $3\leq k\leq m$ and $ms=nk$.
Set $d=\gcd(s,k)$. In this paper we show how one can  construct 
a Heffter array $\H(m,n;s,k)$ starting from a square Heffter array $\H(nk/d;d)$ whose elements belong to $d$ 
consecutive diagonals. As an example of application of this method, we prove that
there exists an integer $\H(m,n;s,k)$ in each of the following cases:
$(i)$ $d\equiv 0 \pmod 4$; 
$(ii)$  $5\leq d\equiv 1 \pmod 4$ and $n k\equiv 3\pmod 4$;
$(iii)$ $d\equiv 2 \pmod 4$ and $nk\equiv 0 \pmod 4$; 
$(iv)$ $d\equiv 3 \pmod 4$ and $n k\equiv 0,3\pmod 4$.
The same method can be applied also for  signed magic arrays $\SMA(m,n;s,k)$ and for magic rectangles $\MR(m,n;s,k)$.
In fact, we prove that there  exists an $\SMA(m,n;s,k)$ when $d\geq 2$, and
there exists an $\MR(m,n;s,k)$ when  either $d\geq 2$ is even or $d\geq 3$ and $nk$  are odd.
We also provide constructions of integer Heffter arrays and signed magic arrays when $k$ is odd and $s\equiv 0 \pmod 4$.
\end{abstract}

\keywords{Heffter array; signed magic array; magic rectangle; Skolem sequence}
\subjclass[2010]{05B20; 05B30}

\maketitle

\section{Introduction}
 
Heffter arrays are partially filled (pf, for short) arrays introduced by Archdeacon in \cite{A}.

\begin{defi}\label{def:H}
A \emph{Heffter array} $\H(m,n; s,k)$ is an $m \times n$ pf array with elements in $(\Z_{2nk+1},+)$ such that
\begin{itemize}
\item[(\rm{a})] each row contains $s$ filled cells and each column contains $k$ filled cells;
\item[(\rm{b})] for every $x\in \Z_{2nk+1}\setminus\{0\}$, either $x$ or $-x$ appears in the array;
\item[(\rm{c})] the elements in every row and column sum to $0$ (in $\Z_{2nk+1}$).
\end{itemize}
\end{defi}

Trivial necessary conditions for the existence of an $\H(m,n; s,k)$ are $ms=nk$, $3\leq s \leq n$ and $3\leq k \leq m$.
Instead of working with elements of a finite cyclic group, one can work with integers.

\begin{defi}\label{def:HZ}
An \emph{integer} Heffter array $\H(m,n; s,k)$ is an $m \times n$ pf array with elements in $\{\pm 1, \pm 2, \ldots, \pm nk\}\subset \Z$
such that
\begin{itemize}
\item[(\rm{a})] each row contains $s$ filled cells and each column contains $k$ filled cells;
\item[(\rm{b})] no two entries agree in absolute value;
\item[(\rm{c})] the elements in every row and column sum to $0$.
\end{itemize}
\end{defi}

As shown in \cite{A}, an additional necessary condition for the existence of an integer $\H(m,n;s,k)$ is that
$nk\equiv 0,3 \pmod 4$. 
The study of these objects began with the square case 
(i.e., when $m=n$, and so $s=k$), and the tight case (i.e., when $m=k$ and $n=s$). 
In particular,  in \cite{CDDY} it was proved that a square Heffter array $\H(n,n;k,k)$ (that we simply denote by
$\H(n;k)$) exists for all $n\geq k\geq 3$, while by \cite{ADDY, DW} an integer Heffter array $\H(n;k)$ exists if and 
only if $n\geq k\geq 3$ and $nk\equiv 0,3\pmod 4$.
On the other hand, by  \cite{ABD} an $\H(m,n;n,m)$ exists for all $m,n\geq 3$, while an integer $\H(m,n;n,m)$ exists if and only 
if the additional condition $mn\equiv 0,3 \pmod 4$ holds.
These results confirm the validity of the following conjecture, originally proposed by Archdeacon himself.

\begin{conj}\cite[Conjecture 6.3]{A}\label{ConjArch}
Given four integers $m,n,s,k$ such that $3\leq s\leq n$, $3\leq k \leq m$ and $ms=nk$,  there exists a Heffter array $\H(m,n;s,k)$.
If the additional condition $nk\equiv 0,3 \pmod 4$ holds, there exists an integer Heffter array $\H(m,n;s,k)$.
\end{conj}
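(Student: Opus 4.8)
The plan is to reduce the construction of an $\H(m,n;s,k)$ to that of a single square Heffter array whose support is confined to $d$ consecutive diagonals, exactly as the title of the paper promises. Write $d=\gcd(s,k)$, $s=ds'$ and $k=dk'$, so that $\gcd(s',k')=1$. From $ms=nk$ one gets $m=nk/s$ and $n=ms/k$; setting $N=nk/d$ I record the two identities $N=ms'$ and $N=nk'$, which will govern how the square board of side $N$ folds onto the $m\times n$ board. Since a square array $\H(N;d)$ has $Nd=nk$ filled cells and the target rectangle must also carry $ms=nk$ entries, the reduction will be a genuine cell-to-cell bijection rather than a collapse, so that condition (b) (each absolute value used once) is inherited for free from the square array.

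Second, I would make the folding explicit. Index the square board by $(i,j)\in\Z_N\times\Z_N$ and assume its filled cells are those with $j-i\equiv t\pmod N$ for $t\in\{0,1,\dots,d-1\}$, with entry $a(i,t)$ there; the square conditions then read $\sum_{t=0}^{d-1}a(i,t)=0$ for every row $i$ and $\sum_{t=0}^{d-1}a(j-t,t)=0$ for every column $j$. Using $N=ms'$ I would group the $N$ square rows into $m$ blocks of $s'$ consecutive rows, so that each of the $m$ rectangle rows inherits $s'd=s$ entries whose sum, being a sum of $s'$ vanishing row-sums, is automatically $0$. The column direction is where the diagonal hypothesis earns its keep: using $N=nk'$, the $d$ consecutive diagonals must be redistributed into $n$ rectangle columns of $k=k'd$ cells each, in such a way that each rectangle column-sum is a combination of the vanishing square column-sums. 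Producing a genuine partial array from this double bookkeeping — one entry per cell, with row and column constraints simultaneously met — is the delicate point of the reduction, and here $\gcd(s',k')=1$ is exactly what prevents two distinct filled square cells from being forced into the same rectangle cell; verifying this, together with the wrap-around behaviour of the $d$ diagonals, is the first thing I would check carefully.

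Third, with the reduction in hand the problem becomes: \emph{construct a square Heffter array $\H(nk/d;d)$, integer or modular as required, supported on $d$ consecutive diagonals}. Here I would lean on the known square existence results — the $\H(n;k)$ constructions of \cite{CDDY} and, in the integer case, of \cite{ADDY,DW} — and on Skolem-type sequences, to place the entries diagonal by diagonal while keeping every diagonal-sum and transversal-sum zero. The arithmetic constraints then sort themselves into residue classes: the integer case forces $nk\equiv0,3\pmod4$, and propagating this through the fold produces exactly the case split on $d\bmod4$ and $nk\bmod4$ recorded in the introduction. For the $\SMA$ and $\MR$ variants one drops the distinct-absolute-values requirement or relaxes the sign condition, which removes the $\bmod\,4$ obstruction and explains why those ranges of $d$ can be wider.

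The main obstacle I expect is the diagonal square construction of step three, not the reduction of step two. Arranging a square Heffter array so that all its nonzero entries lie on $d$ consecutive diagonals \emph{and} every diagonal-sum and transversal-sum vanishes is a rigid packing problem, in which the parities of $d$ and of $nk$ interact with the availability of suitable Skolem and near-Skolem sequences; the residues $d\equiv1,2\pmod4$ are the delicate ones, which is precisely why the applications attach extra hypotheses there. The genuinely degenerate extremes — small $d$ (in particular $d=1$, where the support is a single diagonal and the naive fold collides), and the cases $s'=1$ or $k'=1$ that trivialise the fold — would have to be handled separately, and completing every residue class at once is exactly what keeps the full Conjecture~\ref{ConjArch} out of reach by this method in its present generality.
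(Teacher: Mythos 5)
The statement you were asked to prove is Archdeacon's Conjecture~6.3, which this paper does \emph{not} prove: it is quoted as an open conjecture, and the paper's contribution (Theorem~\ref{main} via Theorem~\ref{sq->rt}) is partial progress, with the cases $d=1$ and $5\leq d\equiv 1\pmod 4$, $nk\equiv 0\pmod 4$ explicitly left open. Your proposal is therefore not, and does not claim to be, a proof of the statement; what it does is reconstruct, quite accurately, the paper's actual strategy --- fold a cyclically $d$-diagonal square $\H(nk/d;d)$ onto the $m\times n$ board using the congruences $u\equiv i\pmod m$, $v\equiv j\pmod n$, and then invoke known diagonal square constructions --- and you correctly identify both why this preserves conditions (a)--(c) and why it cannot reach the full conjecture ($d=1$ forces a single filled cell per row of the square array, which cannot sum to zero).

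One technical point worth correcting in your step two: you attribute the injectivity of the fold to $\gcd(s',k')=1$. In the paper's argument the coprimality is used earlier, to write $m=\bar k c$ and $n=\bar s c$ with $c=\gcd(m,n)$ and to compute $\lcm(m,n)=n\bar k$; the fact that two filled cells on \emph{distinct} diagonals among the $d$ consecutive ones cannot collide comes instead from the inequality $c=\frac{nd}{s}\geq d$, which is a consequence of $n\geq s$, while collisions \emph{within} a diagonal are excluded by the $\lcm$ computation. If you flesh out the reduction, that is the pair of facts you need to verify, rather than coprimality alone. With that adjustment, your outline matches the paper's method for its partial results, but the statement itself remains a conjecture.
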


The first aim of this paper is to show how it is possible to reduce the problem of the  existence of a rectangular Heffter array 
to the square case where the elements belong to consecutive diagonals (see Theorem \ref{sq->rt}).
This process, combined with \cite[Theorem 1.3]{MP}, allows us to prove the following.

\begin{thm}\label{main}
Let $m,n,s,k$ be four integers such that $3\leq s \leq n$, $3\leq k\leq m$ and $ms=nk$.
Set $d=\gcd(s,k)$. There exists an integer $\H(m,n;s,k)$ in each of the following cases:
\begin{itemize}
\item[(1)] $d\equiv 0 \pmod 4$;
\item[(2)] $d\equiv 1 \pmod 4$ with $d\geq 5$ and $n k\equiv 3\pmod 4$;
\item[(3)] $d\equiv 2 \pmod 4$ and $nk\equiv 0 \pmod 4$; 
\item[(4)] $d\equiv 3 \pmod 4$ and $n k\equiv 0,3\pmod 4$.
\end{itemize}
\end{thm}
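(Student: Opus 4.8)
The plan is to obtain Theorem~\ref{main} as a one-step application of the reduction theorem (Theorem~\ref{sq->rt}), fed by the square arrays produced in \cite[Theorem 1.3]{MP}. Given $m,n,s,k$ as in the hypotheses, I would put $d=\gcd(s,k)$ and $N=nk/d$, and recall that Theorem~\ref{sq->rt} converts a square integer Heffter array $\H(N;d)$ \emph{whose entries lie on $d$ consecutive diagonals} into the desired $\H(m,n;s,k)$. So the whole problem collapses to producing this one diagonal square array. Two elementary arithmetic remarks are needed before invoking the machine: first, $N=n(k/d)\geq n\geq s\geq d$ and likewise $N=m(s/d)\geq m\geq k\geq d$, so $N\geq d$ and $\H(N;d)$ is a bona fide square array as soon as $d\geq 3$; second, $Nd=nk$, so the admissibility congruence $nk\equiv 0,3\pmod 4$ passes unchanged to the square array.

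The core of the proof is then purely a matching of congruences. Using $N=nk/d$ I would rewrite each of the four hypotheses of Theorem~\ref{main} as a condition on the pair $(N,d)$ and check that it coincides with one of the regimes in which \cite[Theorem 1.3]{MP} delivers a diagonal $\H(N;d)$. Explicitly: in case~(1) $d\equiv 0\pmod 4$ forces $nk=Nd\equiv 0\pmod 4$ with no restriction on $N$; in case~(2) $d\equiv 1$ and $nk\equiv 3$ give $N\equiv 3\pmod 4$; in case~(3) the $2$-adic valuations $v_2(nk)\geq 2$ and $v_2(d)=1$ force $N$ even; and in case~(4) $d\equiv 3$ with $nk\equiv 0,3\pmod 4$ yields $N\equiv 0,1\pmod 4$. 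In each instance the resulting $(N,d)$ is exactly a pair for which \cite[Theorem 1.3]{MP} guarantees a square Heffter array supported on $d$ consecutive diagonals, and Theorem~\ref{sq->rt} then finishes the construction.

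The point that I expect to demand the most care is the behaviour at the smallest admissible values of $d$, i.e. the interface between ``the congruence holds'' and ``the cited theorems actually apply''. The crucial check is that $\H(N;d)$ needs $d\geq 3$: this rules out $d=2$ in case~(3) (indeed no $\H(N;2)$ can exist, since a length-two zero-sum line would repeat an absolute value), so the reduction route only covers $d\geq 6$ there and the genuinely small case would have to be addressed separately or shown not to arise. Similarly I would verify that the lower bound $d\geq 5$ imposed in case~(2) is precisely what \cite[Theorem 1.3]{MP} requires, and that in cases~(1) and~(4) the minimal values $d=4$ and $d=3$ are covered. Beyond this boundary audit, no further combinatorial construction is required: the two black-box results do all the building, and Theorem~\ref{main} is their bookkeeping consequence.
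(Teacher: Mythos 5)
Your plan reproduces the paper's argument for cases (1), (2) and (4): these are exactly the three items of Corollary~\ref{cor:main}, obtained by feeding Theorem~\ref{sq->rt} with the known integer diagonal square arrays $\H(a;b)$ for $b\equiv 0\pmod 4$ (any $a$), $b\equiv 1\pmod 4$ with $a\equiv 3\pmod 4$, and $b\equiv 3\pmod 4$ with $a\equiv 0,1\pmod 4$, and your congruence translation $nk\equiv 0,3\pmod 4 \Leftrightarrow N\equiv 0,1\pmod 4$ when $d\equiv 3$, etc., is correct, as is the bound $N=nk/d\geq n\geq s\geq d$.

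The gap is in case (3), $d\equiv 2\pmod 4$. Your route needs an integer \emph{diagonal} $\H(N;d)$ with $d\equiv 2\pmod 4$, and no such construction is available: the list of diagonal square integer Heffter arrays used in the paper (from \cite{ADDY}, \cite{DW}, \cite{CMPPHeffter}, \cite{MP}) covers only $b\equiv 0,1,3\pmod 4$, and Corollary~\ref{cor:main} correspondingly omits the residue $2$. You cannot repair this by citing \cite[Theorem~1.3]{MP} for the square array either: that result is a direct construction of \emph{rectangular} integer Heffter arrays when $s$ and $k$ are both even, not a source of diagonal squares. Moreover $d=2$ genuinely occurs in case (3) (e.g.\ $s=4$, $k=6$) and, as you yourself note, $\H(N;2)$ does not exist, so the reduction theorem is structurally unable to reach it. The paper's actual proof sidesteps all of this: it first invokes \cite[Theorem~1.3]{MP} to dispose of \emph{every} even $d$ (cases (1) and (3)) by a direct rectangular construction, and only then applies the reduction machinery, via Corollary~\ref{cor:main}(2)--(3), to the remaining odd $d\geq 3$. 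To complete your proof you would need to replace your treatment of case (3) — for all $d\equiv 2\pmod 4$, not just $d=2$ — by that direct argument or an equivalent one.
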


Thanks to this result, Conjecture \ref{ConjArch}  for the integer case remains open only when $d=1$
or $5\leq d\equiv 1 \pmod 4$ and $nk\equiv 0 \pmod 4$.
In particular, one can assume $k$ odd: in fact, the transpose of an (integer) $\H(m,n;s,k)$ is an (integer)
$\H(n,m;k,s)$. A further result in this direction is given in Section \ref{s0}, where
we construct integer Heffter arrays  when $s\equiv 0\pmod 4$.

\begin{thm}\label{main_odd}
Let  $m,n,s,k$ be four integers such that $3\leq s\leq n$, $3\leq k \leq m$ and $ms=nk$.
If $s\equiv 0 \pmod 4$ and $k\neq 5$ is odd, then  there exists an integer Heffter array $\H(m,n;s,k)$.
\end{thm}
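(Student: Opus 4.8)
The plan is to exploit the hypothesis $s\equiv 0\pmod 4$ to make every row sum vanish automatically, reducing the whole problem to a question about columns. Write $s=4t$ and partition the value set $\{1,2,\dots,nk\}$ into $tm=nk/4$ blocks of four consecutive integers (this is legitimate because $s\equiv 0\pmod 4$ forces $4\mid ms=nk$, which is also the necessary condition $nk\equiv 0\pmod 4$). The elementary identity
\begin{equation*}
a-(a+1)-(a+2)+(a+3)=0
\end{equation*}
shows that such a quadruple, signed as $(+,-,-,+)$, sums to $0$ and has four distinct absolute values; distinct blocks occupy disjoint ranges, so condition (b) of Definition \ref{def:HZ} is automatic, and any quadruple placed entirely inside one row contributes $0$ to that row's sum. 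If each of the $m$ rows receives exactly $t$ quadruples across its $s$ filled cells, then condition (a) and the row part of condition (c) hold for free, and the only remaining task is to position the quadruples, and to choose their orientations $(+,-,-,+)$ or $(-,+,+,-)$, so that each of the $n$ columns also sums to $0$.

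Next I would pin down which cases actually need new work. Since $s\equiv 0\pmod 4$ forces $nk\equiv 0\pmod 4$, every instance with $d=\gcd(s,k)\equiv 3\pmod 4$ is already settled by Theorem \ref{main}(4). Thus the genuinely new instances are $\gcd(s,k)=1$ and $\gcd(s,k)\equiv 1\pmod 4$ with $\gcd(s,k)\geq 5$; for precisely these the square-on-consecutive-diagonals input behind Theorem \ref{sq->rt} is unavailable from \cite{MP} (its $d\equiv 1\pmod 4$ construction requires $nk\equiv 3\pmod 4$, whereas here $nk\equiv 0\pmod 4$), so I would build the shape directly. I would choose the support to be a union of diagonals with $s$ filled cells in each row and $k$ in each column; since $ms=nk$ such a regular shape exists, and arranging it so that the columns are translates of one another lets a single balanced column pattern propagate across the entire array.

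The heart of the argument is the value assignment, and here I would use Skolem-type sequences (together with their Langford and hooked variants) to distribute the $tm$ consecutive quadruples among the cells and to assign each quadruple an orientation so that the $k$ signed entries sitting in any fixed column cancel. Because $k$ is odd, the entries of a column cannot be matched in $\pm$ pairs, so the balancing must be arranged globally rather than cell by cell; a Skolem-type sequence of the appropriate length is exactly the combinatorial object that produces such an odd, sum-balanced placement. Once one column is balanced, the translate structure of the support transports the solution to all columns simultaneously.

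The main obstacle I expect is precisely this odd-$k$ column balancing: the parity of $k$ rules out any trivial pairing, and the required Skolem-type sequence fails to exist for a few short lengths. This is the source of the exclusion $k\neq 5$, the one residual length for which no admissible base configuration can be built; the smallest genuine cases (such as $k=3,7$) I would construct explicitly by hand and then use to seed an inductive extension, appending further groups of four columns that carry fresh consecutive quadruples without disturbing the already-balanced rows and columns. Finally I would verify that the completed array meets all three conditions of Definition \ref{def:HZ} and that the ranges $3\leq s\leq n$ and $3\leq k\leq m$ impose no constraint beyond $ms=nk$.
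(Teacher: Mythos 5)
Your proposal correctly isolates the easy half of the problem (making row sums vanish by grouping the $s\equiv 0\pmod 4$ entries of each row into zero-sum quadruples) but leaves the hard half --- making the $k$ odd entries of every column cancel --- entirely to an appeal to ``Skolem-type sequences,'' with no construction, no statement of which sequence is needed, and no argument that it exists. That step is the whole technical content of the theorem, and your framework makes it harder than it needs to be: if the quadruples are literally four consecutive integers signed $(+,-,-,+)$, then arranging $nk/4$ disjoint consecutive ranges so that every column of $k$ entries sums to zero is a genuinely difficult global balancing problem, and your claim that ``once one column is balanced, the translate structure of the support transports the solution to all columns'' is false as stated, since translated columns contain different values and there is no automorphism carrying one balanced column to the next. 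The paper sidesteps this by \emph{not} using consecutive quadruples: it builds, for each $n$, a family of $3\times 4$ blocks (the $A_i$, $B_a$, $C_a$, $E_i$, $F_a$, $G_a$, derived from Skolem sequences) whose rows \emph{and} columns each sum to zero and whose supports partition $[1,3n]$; it then stacks under each such block a shiftable $(k-3)\times 4$ block (built from explicit $4\times 4$ and $6\times 4$ pieces $Q_4$, $Q_6$) to get $k\times 4$ blocks $Z_i$ with zero row and column sums and total support $[1,nk]$; finally it places $Z_i$ with its $(a,b)$ entry in cell $(a+ki,\,b+4i)$, so that each column of the array is a column of a single $Z_i$ (column sums vanish by construction of the blocks) and each row is a union of $s/4$ zero-sum block-rows. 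All the combinatorial difficulty is thus packed into the explicit block constructions, which your proposal does not supply.

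Two smaller points. First, your diagnosis of the exclusion $k\neq 5$ is close in spirit but not accurate: in the paper it arises because the construction needs a shiftable $\H(k-3,4;4,k-3)$, and no such array exists when $k-3=2$ (a column with two entries of distinct absolute value cannot be both sign-balanced and zero-sum), whereas $k-3\geq 4$ even is fine. Second, your reduction of the problem to the cases $d=1$ and $d\equiv 1\pmod 4$, $d\geq 5$ via Theorem \ref{main} is a legitimate observation, but the paper deliberately does not use it --- its construction is uniform in $\gcd(s,k)$ --- so you would still owe a complete construction for those residual cases, which is exactly the part your sketch omits.
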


Our reduction theorem will be proved in the more general context of $\lambda$-fold relative Heffter arrays,
objects introduced in \cite{SA}.
One of the main reasons that justifies the study of Heffter arrays and of their generalizations is that they allow,
under suitable conditions (or assuming the validity of \cite[Conjecture 3]{CMPP}), to produce biembeddings of 
pairs of cyclic decompositions (one decomposition consisting of $s$-cycles and the other one consisting of $k$-cycles) 
of the complete multipartite  multigraph ${}^\lambda K_{\ell\times t}$, with $\ell$ parts each of size $t$,
onto orientable surfaces (see \cite{BCDY,CDYBiem,CMPPHeffter,SA,RelHBiem,DM}).
Partial results about the existence of these generalizations have been obtained in \cite{RelH,SA,RelHBiem,MP,MP2}.

In Section \ref{sec:magic} we provide a similar reduction theorem for magic rectangles and signed magic arrays.

\begin{defi}\label{def:rect}
A \emph{magic rectangle} $\MR(m,n; s,k)$ is an $m\times n$ pf  array
with elements in $\Omega=\{0,1,\ldots,nk-1\}\subset \Z$ such that
\begin{itemize}
\item[{\rm (a)}] each row contains $s$ filled cells and each column contains $k$ filled cells;
\item[{\rm (b)}] every $x \in \Omega$ appears exactly once in the array;
\item[{\rm (c)}] the sum of the elements in each row is a constant value $c_1$  and
the sum of the elements in each column is a constant value $c_2$.
\end{itemize}
\end{defi}

\begin{defi}\label{def:Magic}
A \emph{signed magic array} $\SMA(m,n; s,k)$ is an $m\times n$ pf  array
with elements in $\Omega\subset \Z$, where $\Omega=\{0,\pm 1, \pm 2,\ldots, \pm (nk-1)/2\}$ if $nk$ is odd and
$\Omega=\{\pm 1, \pm 2, \ldots, \pm nk/2\}$ if $nk$ is even, such that
\begin{itemize}
\item[{\rm (a)}] each row contains $s$ filled cells and each column contains $k$ filled cells;
\item[{\rm (b)}] every $x \in \Omega$ appears exactly once in the array;
\item[{\rm (c)}] the elements in every row and column sum to $0$.
\end{itemize}
\end{defi}

The existence problem for diagonal magic squares (that is, magic squares whose elements belong to consecutive diagonals) has been already solved in \cite{rect1}: 
then our reduction theorem gives the following.

\begin{thm}\label{magic_rect}
Let  $m,n,s,k$ be four integers such that $3\leq s\leq n$, $3\leq k \leq m$ and $ms=nk$.
Set $d=\gcd(s,k)$ and suppose $d\geq 2$. 
If $d$ is even or $nk$ is odd, then there exists an $\MR(m,n;s,k)$.
\end{thm}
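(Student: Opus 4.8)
The plan is to mirror the strategy announced in the introduction: reduce the existence of a rectangular magic rectangle $\MR(m,n;s,k)$ to the square diagonal case, and then invoke the known results of \cite{rect1}. Since $d=\gcd(s,k)$, write $s=d\sigma$ and $k=d\kappa$ with $\gcd(\sigma,\kappa)=1$. First I would set up the analogue of Theorem \ref{sq->rt} for magic rectangles, showing that a rectangular $\MR(m,n;s,k)$ can be built from a \emph{square} array of order $N=nk/d$ whose filled cells occupy $d$ consecutive diagonals and which is itself a (diagonal) magic square in the appropriate sense; this is exactly the reduction theorem promised for magic rectangles in Section \ref{sec:magic}. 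The point is that the square array of order $N$ with $d$ consecutive occupied diagonals has exactly $d$ filled cells per row and per column, and by partitioning its rows into blocks of size $N/m$ and its columns into blocks of size $N/n$ (using $ms=nk=Nd$, so $N/m=s/d=\sigma$ and $N/n=k/d=\kappa$) one folds it into the desired $m\times n$ rectangle with $s$ cells per row and $k$ cells per column.

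Next I would verify that the folding preserves the magic-rectangle conditions. Condition (a) on the number of filled cells per row and column follows from the block sizes computed above. For condition (b), the reduction must be a bijection on the entry set, so the square diagonal magic square of order $N$ must use precisely the elements of $\Omega=\{0,1,\ldots,nk-1\}$; one arranges that the superposition of the blocks recombines the $Nd=nk$ entries without repetition. For condition (c), the crucial observation is that summing the entries in a folded row (respectively column) of the rectangle equals summing the entries of the corresponding block of $\sigma$ (respectively $\kappa$) rows/columns of the square array, so constancy of the row sum $c_1$ and the column sum $c_2$ in the rectangle is inherited from the constant line sums of the diagonal magic square. The parity hypotheses — $d$ even, or $nk$ odd — are precisely the conditions under which a diagonal magic square of the required order $N$ with $d$ consecutive occupied diagonals is known to exist by \cite{rect1}, so the final step is simply to check that $d\geq 2$ together with the stated parity assumption lands inside the existence range established there.

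The main obstacle, I expect, is the combinatorial bookkeeping of the folding map: one must choose the partition of rows and columns of the order-$N$ square into blocks so that the resulting rectangle has \emph{exactly} $s$ and $k$ cells per line (not merely on average) and so that no two occupied cells collide when the blocks are superimposed. Because the occupied cells lie on $d$ consecutive diagonals, their positions are rigidly controlled, and the coprimality $\gcd(\sigma,\kappa)=1$ is what guarantees that the natural block assignment distributes the diagonal cells evenly and injectively; making this precise, and confirming it respects the constant line-sum property, is the technical heart of the argument. Once the reduction lemma of Section \ref{sec:magic} is in hand, the deduction of Theorem \ref{magic_rect} is immediate: apply that lemma to the diagonal magic square of order $N=nk/d$ furnished by \cite{rect1} under the hypothesis that $d\geq 2$ is even or $nk$ is odd.
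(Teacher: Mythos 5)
Your overall strategy — fold a diagonal square array of order $N=nk/d$ with $d$ occupied consecutive diagonals into an $m\times n$ rectangle, and source that square from \cite{rect1} — is indeed the paper's route for the case $nk$ odd, and it would also work for even $d\geq 4$. But there is a genuine gap at $d=2$, which your hypotheses explicitly allow. The classification in \cite{rect1} that you invoke produces a diagonal $\MR(a;b)$ only for $a\geq b\geq 3$ with $a$ odd or $b$ even; it says nothing about $b=2$, and in fact no diagonal $\MR(a;2)$ exists for $a\geq 2$: with filled cells on $D_0\cup D_1$, the row condition forces $a_{i,i+1}=c_1-a_{i,i}$ and the column condition then forces $a_{i+1,i+1}=c_2-a_{i,i+1}=a_{i,i}+(c_2-c_1)$; summing around the cycle gives $c_1=c_2$ and hence all diagonal entries equal, contradicting condition (b). So your claim that ``$d$ even or $nk$ odd are precisely the conditions under which the required diagonal magic square is known to exist'' fails exactly where it is needed most.

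The paper closes this hole by treating the entire even-$d$ case through a different mechanism: when $d$ is even, $s$ and $k$ are both even, so Proposition \ref{shiftSMA} yields a \emph{shiftable} $\SMA(m,n;s,k)$, and Lemma \ref{magic->rect} converts any shiftable signed magic array into a magic rectangle of the same parameters. Only the case $d\geq 3$ with $nk$ odd is handled by the reduction Theorem \ref{magic sq->rt} applied to the diagonal $\MR(nk/d;d)$ of \cite{rect1}. To repair your argument you would either need to adopt this shiftable-$\SMA$ detour for $d=2$ (and then you may as well use it for all even $d$), or restrict the reduction route to $d\geq 3$. A minor further remark: in the reduction itself, the injectivity of the folding map rests on the inequality $\gcd(m,n)\geq d$ (which follows from $n\geq s$), not merely on $\gcd(\sigma,\kappa)=1$; the latter only guarantees that $m$ and $n$ have the right block structure.
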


In Section \ref{sec:magic} we also consider the existence of a diagonal square $\SMA(n;k)$.
In fact, some of the signed magic squares obtained by Khodkar, Schulz, and Wagner in  \cite{magicDM} are not diagonal. 
So, we firstly construct a diagonal $\SMA(n;3)$ for all $n\geq 3$ such that $n\not \equiv 0\pmod 4$,
a diagonal $\SMA(n;5)$ for all odd $n\geq 5$, and a diagonal $\SMA(n;6)$ for all $n\geq 6$.
This allows us to prove that a diagonal $\SMA(n;k)$ exists for all $n\geq k \geq 3$.
Then, applying our reduction theorem we obtain the following result.

\begin{thm}\label{magic}
Let  $m,n,s,k$ be four integers such that $3\leq s\leq n$, $3\leq k \leq m$ and $ms=nk$.
There  exists an $\SMA(m,n;s,k)$ whenever $\gcd(s,k)\geq 2$. 
\end{thm}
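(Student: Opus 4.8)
The plan is to reproduce, for signed magic arrays, the reduction already available for Heffter arrays. The first step is the $\SMA$ analogue of Theorem \ref{sq->rt} (the reduction theorem of Section \ref{sec:magic}): from a \emph{diagonal} square $\SMA(N;d)$ — a signed magic square of order $N=nk/d$ with exactly $d$ filled cells per line, all lying on $d$ consecutive (broken) diagonals — one folds and re-assembles the entries into an $m\times n$ array realizing an $\SMA(m,n;s,k)$. Writing $s=ds'$, $k=dk'$ with $\gcd(s',k')=1$ and $m=k't$, $n=s't$, the square has order $N=nk/d=s'k't\geq d$, and the re-assembly preserves the value set $\Omega$. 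Thus the whole problem reduces to: \emph{for every $N\geq d\geq 2$, construct a diagonal $\SMA(N;d)$.}

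For $d\geq 3$ I would build these squares from a short library of base blocks plus a composition lemma. The key observation is that a diagonal block occupying $d$ consecutive diagonals whose entries form a \emph{symmetric} set $\pm\{c+1,\dots,c+Nd/2\}$ still has zero line sums after the shift by $c$; hence, if the base blocks are made \emph{shiftable} (valid on any symmetric range of consecutive values, typically via Skolem-type sequences), then placing a shiftable diagonal $\SMA$ on $d_1$ diagonals and a second on the next $d_2$ diagonals, with consecutive symmetric value ranges, yields a diagonal $\SMA(N;d_1+d_2)$ realizing exactly $\Omega$. It then suffices to produce shiftable base blocks for a generating set of diagonal counts: I would start from the $\SMA(N;3)$ ($N\not\equiv 0\pmod 4$), $\SMA(N;5)$ ($N$ odd) and $\SMA(N;6)$ (all $N\geq 6$) constructions, add dedicated constructions for the counts not reachable as sums (notably $d=4$ and $d=7$, and the corner $\SMA(N;3)$ with $N\equiv 0\pmod 4$, which the first block misses), and write a general $d\geq 3$ as a sum of available block sizes.

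The case $d=2$ escapes this scheme: a diagonal $\SMA(N;2)$ cannot exist, because with two consecutive diagonals each line meets each diagonal exactly once, and the line-sum equations telescope cyclically to force all entries on one diagonal to coincide. When $\gcd(s,k)=2$ I would instead give a direct construction of $\SMA(m,n;s,k)$, exploiting that $s=2s'$ and $k=2k'$ are both even, so that the array can be assembled from $\pm$-balanced small blocks (or two superimposed copies of a rectangular scheme) covering $\{\pm 1,\dots,\pm nk/2\}$ with zero line sums.

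The main obstacle, I expect, is twofold. First, the bookkeeping in the composition step: the base blocks must be genuinely shiftable and their value ranges must interlock to realize \emph{exactly} the prescribed $\Omega$ while the parity and divisibility constraints of the $k=3,5,6$ blocks are met simultaneously for all admissible orders $N$; producing a single decomposition of $d$ that works uniformly in $N\bmod 4$ (in particular handling $N\equiv 0\pmod 4$, where the three-diagonal block is unavailable) is the delicate point. Second, the $d=2$ case needs its own explicit construction, since it cannot be obtained from any diagonal square through the reduction. Once both ingredients are in place, Theorem \ref{magic} follows by applying the $\SMA$ reduction to the diagonal $\SMA(nk/d;d)$ just constructed.
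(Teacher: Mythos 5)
Your proposal follows essentially the same route as the paper: an $\SMA$ analogue of the reduction theorem, a proof that a diagonal $\SMA(N;d)$ exists for all $N\geq d\geq 3$ built from base blocks with $d\in\{3,5,6\}$ composed with shifted copies of a shiftable $4$-diagonal square (writing $d=4h+r$), and a separate direct construction for $d=2$ exploiting that $s$ and $k$ are then both even (the paper does this via a shiftable $\SMA(n;s)$ stacked with a shiftable $\SMA(m-n,n;s,k-s)$). Your observation that a diagonal $\SMA(N;2)$ cannot exist, forcing the separate treatment of $d=2$, is correct and is exactly why the paper routes that case through its Proposition on shiftable arrays.
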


To conclude our paper, we construct signed magic arrays when $s \equiv 0\pmod 4$.

\begin{thm}\label{magic4}
Let $m,n,s,k$ be four integers such that $3\leq s\leq n$, $3\leq k \leq m$ and $ms=nk$.
If $s\equiv 0 \pmod 4$, then  there exists an $\SMA(m,n;s,k)$.
\end{thm}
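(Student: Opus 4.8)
The plan is to separate the one genuinely new case from what is already known, and then to treat it with an explicit, translation-invariant construction.

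First I would cut down the parameter range. Theorem~\ref{magic} already produces an $\SMA(m,n;s,k)$ whenever $\gcd(s,k)\geq 2$, so the only configurations not yet covered are those with $\gcd(s,k)=1$. Since $s\equiv 0\pmod 4$ is even, $\gcd(s,k)=1$ forces $k$ to be odd; moreover $ms=nk$ together with $\gcd(s,k)=1$ gives $s\mid n$ and $k\mid m$, so $4\mid n$ and the array has size $(ka)\times(sa)$ with $a=n/s=m/k$. As $nk=sak$ is even, the symbol set is $\Omega=\{\pm 1,\pm 2,\ldots,\pm nk/2\}$. The feature I would exploit is that a row of length $s\equiv 0\pmod 4$ can be filled \emph{balanced}, with equally many positive and negative entries, which makes such a row insensitive to a uniform translation of the absolute values of its symbols.

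Writing $s=4t$, I would partition the $n=4ta$ columns into $n/4$ consecutive groups of four and fix a biregular incidence between the groups and the $m$ rows in which each group activates exactly $k$ rows and each row is active in exactly $t$ groups; the two edge counts agree, $(n/4)k=tak=mt$, so such an incidence exists (a cyclic round-robin assignment works). On the four columns of an active group I then place a fully filled $k\times 4$ signed magic subarray. Its even part is assembled from translatable quadruples
\[
\begin{pmatrix} v & v+3 & -(v+1) & -(v+2)\\ -v & -(v+3) & v+1 & v+2 \end{pmatrix},
\]
each of which has vanishing row and column sums for every $v$, because all of its rows and columns are balanced; stacking $(k-3)/2$ such $2\times 4$ blocks on consecutive runs of symbols fills $k-3$ of the $k$ active rows, and by balance these blocks may be translated freely to whatever consecutive symbols are reserved for them. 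With this ingredient every array row sums to zero and carries $4t=s$ cells, and every column of an active group sums to zero and carries $k$ cells.

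The hard part will be the remaining odd layer forced by $k$ being odd. A column of odd length cannot be balanced, so its entries must satisfy a genuine three term relation $\pm v_1\pm v_2\pm v_3=0$; unlike the balanced quadruples, such a relation is \emph{not} preserved under a uniform translation of the absolute values, so the three rows left in each group cannot be filled from a consecutive block and must instead draw symbols spanning the whole range of $\Omega$. Concretely one must build, for each group, a fully filled $\SMA(3,4;4,3)$ (which does exist on suitably spread $6$-sets, each forced to have the additive shape $\{x,y,x+y\}$ in every column), and then choose the symbols feeding the $ta$ cores so that every three term column relation closes up and, together with the consecutive blocks consumed by the quadruples, each element of $\{1,\ldots,nk/2\}$ is used exactly once. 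This global selection of the core symbols is the crux of the argument; once it is arranged, assembling the groups one column-group at a time would yield the desired array. I would finally note that, in contrast with the integer Heffter case of Theorem~\ref{main_odd}, no congruence on $nk$ is imposed here and the value $k=5$ causes no trouble, which is exactly why Theorem~\ref{magic4} carries no extra hypotheses beyond $s\equiv 0\pmod 4$.
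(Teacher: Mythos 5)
Your architecture matches the paper's almost exactly: reduce via Theorem~\ref{magic} to $k$ odd, build $k\times 4$ blocks consisting of a $3\times 4$ core (forced by the odd column length) stacked on translatable $2\times 4$ balanced pieces, and place the blocks cyclically by filling cell $(a+ki,b+4i)$ with cell $(a,b)$ of the $i$-th block. Your $2\times 4$ quadruple is essentially the paper's block $Q$, and your observation that $k=5$ is harmless here (unlike in Theorem~\ref{main_odd}) is correct, since a $2\times 4$ shiftable signed layer exists even though a shiftable $\H(2,4;4,2)$ does not.

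However, there is a genuine gap: you explicitly defer the one step that carries the combinatorial content of the theorem. The existence of $n/4$ fully filled $3\times 4$ cores, each with zero row and column sums, whose entries together exhaust $\pm[1,3n/2]$ exactly once, is not a routine "global selection" --- it is the theorem's engine, and calling it "the crux" without constructing it leaves the proof incomplete. The paper resolves it with an explicit parametrized family $A$, $B_a$, $C_a$ built from signed Skolem sequences (the sequence $\mathcal{A}(\mu)$ of \eqref{mseqU1}), together with a verification that $\E(\mathcal{A}(\mu))=\pm[1,12\mu+6]$. Note also that this family has odd length $2\mu+1$, so it only supplies the $n/4$ cores when $n\equiv 4\pmod 8$; the paper dispatches $n\equiv 0\pmod 8$ separately through Corollary~\ref{ultimo} (the $\lambda=2$ doubled-column construction). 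Your proposal would need either a core family valid for all $n\equiv 0\pmod 4$ or an analogous case split, and neither is addressed. Until the cores are exhibited (or derived from a citable existence result for the relevant signed Skolem/Langford sequences), the argument does not close.
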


\section{Notation}

In this paper,  the arithmetic on the row (respectively, on the column) indices is performed modulo $m$
(respectively, modulo $n$), where the set of reduced residues is $\{1,2,\ldots,m\}$ (respectively,
 $\{1,2,\ldots,n\}$), while the entries of the arrays are taken in $\Z$.
Given two integers $a\leq b$, we denote by $[a,b]$ the interval consisting of the integers $a,a+1,\ldots,b$.
If $a>b$, then $[a,b]$ is empty. 

We denote by $(i,j)$ the cell in the $i$-th row and $j$-th column 
of a pf array $A$. The skeleton of $A$ is the set of its filled positions: it will be denoted by $\skel(A)$.
The \emph{support} of  $A$, denoted by $\supp(A)$, is defined to be the set of the absolute
values of the elements contained in $A$, while $\E(A)$ denotes the \emph{list} of the entries of the filled cells of $A$.
We also write $\E(i,j)$ to indicate the entry of the cell $(i,j)$ of $A$.
Given a sequence $S=(B_1,B_2,\ldots,B_r)$ of pf arrays,
we set $\supp(S)=\cup_i \supp(B_i)$ and $\E(S) = \cup_i \E(B_i)$.

Let $A=(a_{i,j})$ be a pf square array of size $n$. We say that the
element $a_{i,j}$ belongs to the 
diagonal $D_{r}$ if $j-i\equiv r \pmod{n}$.
Moreover, $A$ is said to be cyclically $b$-diagonal if
the nonempty cells of $A$ are exactly those of $b$ consecutive diagonals.
For instance, the pf array $A$ given in  Figure \ref{fig2} is an integer $\H(12;3)$ which is 
cyclically $3$-diagonal. In fact, $\skel(A)=D_{0}\cup D_{1}\cup D_{2}$.

\begin{figure}
\begin{footnotesize}
 $$\begin{array}{|c|c|c|c|c|c|c|c|c|c|c|c|} \hline
 -36 & 12 & 24 & &  &  &  &  &  &  &  &  \\\hline
 & -25 & -10& 35& &  &  &  &  &  &  &     \\ \hline
 & &  -14& -9& 23& &  &  &  &  &  &     \\\hline
 & &  &  -26& -8& 34& &  &  &  &  &     \\ \hline
 & &  &  &  -15& -7& 22& &  &  &  &     \\ \hline
 & &  &  &  &  -27& 11& 16& &  &  &     \\ \hline
 & &  &  &  &  &  -33& 5& 28& &  &    \\\hline
 & &  &  &  &  &  &  -21& 4& 17& &     \\ \hline
 & &  &  &  &  &  &  &  -32& 3& 29&    \\ \hline
 & &  &  &  &  &  &  &  &  -20& 2& 18  \\ \hline
30 &  &  &  &  &  &  &  &  &  &  -31 & 1 \\ \hline
 6 & 13 & &  &  &  &  &  &  &  &  &  -19  \\   \hline
  \end{array}$$
  \end{footnotesize}
\caption{An integer cyclically $3$-diagonal $\H(12;3)$.}\label{fig2}
\end{figure}

\begin{defi}
A pf array with entries in $\Z$ is said to be \emph{shiftable} if
every row and every column contains an equal number of positive and negative entries.
\end{defi}

Let $A$ be a shiftable pf array and $x$ be a nonnegative integer.
Let $A\pm x$ be the (shiftable) pf array obtained by adding $x$ to each positive
entry of $A$ and $-x$   to each negative entry of $A$. 
Observe that, since  $A$ is shiftable, the row and column sums of $A\pm x$ are exactly
the row and column sums of $A$.

\section{The reduction theorem}

In this section we show how one can construct rectangular Heffter arrays starting from a square Heffter array having a particular shape.
This result will be proved in the more general context of $\lambda$-fold relative Heffter arrays, see \cite{SA,MP2}.

\begin{defi}\label{LambdaH}
Let $m,n,s,k,t,\lambda$ be positive integers such that $\lambda$ divides $2nk$ and $t$ divides $\frac{2nk}{\lambda}$.
Let $J$ be the subgroup of order $t$ of $\Z_v$, where $v=\frac{2nk}{\lambda}+t$.
A \emph{$\lambda$-fold Heffter array} over $\Z_v$ relative to $J$, denoted by ${}^\lambda \H_t(m,n;s,k)$, is
an $m\times n$ pf array $A$ with elements in $\Omega=\Z_v\setminus J$ such that:
\begin{itemize}
\item[{\rm (a)}] each row contains $s$ filled cells and each column contains $k$ filled cells;
\item[{\rm (b)}] every element of $\Omega$ appears exactly $\lambda$ times in the list $\E(A)\cup -\E(A)$;
\item[{\rm (c)}] the elements in every row and column sum to $0$.
\end{itemize}
\end{defi}

Also for these arrays, there exists an `integer' version.

\begin{defi}\label{LambdaHInt}
Let $m,n,s,k,t,\lambda$ be positive integers such that $\lambda$ divides $2nk$ and $t$ divides $\frac{2nk}{\lambda}$.
Let 
$$\Phi=\left\{1,2,\ldots,\left\lfloor\frac{v}{2} \right\rfloor   \right\}\setminus \left\{\ell, 2\ell,\ldots,
\left\lfloor \frac{t}{2}\right\rfloor \ell \right \} \subset \Z,\quad \textrm{where } 
v=\frac{2nk}{\lambda}+t \textrm{ and } \ell=\frac{v}{t}.$$ 
An \emph{integer} ${}^\lambda \H_t(m,n;s,k)$ is
an $m\times n$ pf array with elements in $\Phi$ such that:
\begin{itemize}
\item[{\rm(a)}] each row contains $s$ filled cells and each column contains $k$ filled cells;
\item[{\rm(b)}] if  $v$ is odd or if  $t$ is even,  every element of $\Phi$ appears, up to sign, exactly 
$\lambda$ times in the array; if $v$ is even and  $t$ is odd, every element of $\Phi\setminus \{\frac{v}{2}\}$
appears, up to sign, exactly 
$\lambda$ times while $\frac{v}{2}$ appears, up to sign, exactly $\frac{\lambda}{2}$ times;
\item[{\rm(c)}] the elements in every row and column sum to $0$.
\end{itemize}
\end{defi}

Clearly, an (integer) ${}^1 \H_1(m,n;s,k)$ is nothing but an (integer) $\H(m,n;s,k)$. In the following, we write
${}^\lambda \H_t(n;k)$ for ${}^\lambda \H_t(n,n;k,k)$.
Also, we say that a ${}^\lambda \H_t(n;k)$  is diagonal if it is cyclically $k$-diagonal.

\begin{thm}\label{sq->rt}
Let $m,n,s,k$ be four integers such that $2\leq s \leq n$, $2\leq k\leq m$ and $ms=nk$.
Let $\lambda$ be a divisor of $2nk$ and let $t$ be a divisor of $\frac{2nk}{\lambda}$.
Set $d=\gcd(s,k)$.
If  there exists a (shiftable/integer) diagonal ${}^\lambda \H_t\left( \frac{nk}{d}; d \right)$,
then there exists a (shiftable/integer) ${}^\lambda \H_t( m,n; s,k)$.
\end{thm}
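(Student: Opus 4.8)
The plan is to realize the rectangular array as a modular ``folding'' of the square one, exploiting both the diagonal shape and the divisibility relations among the parameters. First I would fix notation. Writing $d=\gcd(s,k)$, set $s=ds'$ and $k=dk'$ with $\gcd(s',k')=1$. From $ms=nk$ and coprimality one gets $k'\mid m$ and $s'\mid n$, say $m=bk'$ and $n=bs'$ with $b=m/k'=n/s'$; consequently the square array has size $N:=nk/d=bs'k'$, and crucially $N\equiv 0\pmod m$ and $N\equiv 0\pmod n$ (indeed $N/m=s'$ and $N/n=k'$). I would also record the inequality $d\le b$, which is exactly the hypothesis $s\le n$ (equivalently $k\le m$) after dividing by $s'$ (resp.\ $k'$).

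Next I would define the reduction map explicitly. Let $A$ be the given diagonal ${}^\lambda\H_t(N;d)$, so that its filled cells are those $(i,j)$ with $j-i$ lying in a window of $d$ consecutive residues modulo $N$; after a harmless relabelling assume this window is $\{0,1,\dots,d-1\}$, so each filled cell has the form $(i,\,i+r)$ with $r\in\{0,\dots,d-1\}$. I would build the $m\times n$ array $B$ by sending the entry of cell $(i,j)$ of $A$ to the cell $(\,i\bmod m,\ j\bmod n\,)$ of $B$ (with representatives in $\{1,\dots,m\}$ and $\{1,\dots,n\}$); this is well defined because $n\mid N$ makes the reduction modulo $n$ insensitive to the wrap-around of the diagonals.

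The heart of the proof --- and the step I expect to be the main obstacle --- is showing that this map is \emph{injective on filled cells}, i.e.\ that no two entries of $A$ collide in the same cell of $B$. Suppose $(i_1,i_1+r_1)$ and $(i_2,i_2+r_2)$ have the same image. Then $i_1\equiv i_2\pmod m$, say $i_2-i_1=am$, and $r_1-r_2\equiv i_2-i_1=am\pmod n$. Substituting $m=bk'$, $n=bs'$ gives $r_1-r_2\equiv abk'\pmod{bs'}$, whence $b\mid(r_1-r_2)$; since $|r_1-r_2|\le d-1\le b-1$, this forces $r_1=r_2$. The congruence then reduces to $ak'\equiv 0\pmod{s'}$, and $\gcd(s',k')=1$ yields $s'\mid a$, so $i_2-i_1=am$ is a multiple of $s'bk'=N$; as $i_1,i_2\in\{0,\dots,N-1\}$ we conclude $i_1=i_2$. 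Thus the map is injective, and this is precisely where the inequality $d\le b$ (i.e.\ $s\le n$) and the coprimality of $s'$ and $k'$ are used.

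Finally I would check that all the defining properties transfer. Because $N/m=s'$ and the map is injective, each row of $B$ collects the entries of exactly $s'$ complete rows of $A$, giving $s'd=s$ filled cells per row; likewise $N/n=k'$ gives $k'd=k$ filled cells per column. Each row (resp.\ column) of $B$ is therefore a disjoint union of complete rows (resp.\ columns) of $A$, so its entries sum to $0$. Since the map is a bijection on entries, the list $\E(B)$ equals $\E(A)$; as the relevant parameter $nk=Nd$ is unchanged, the group $\Z_v$, the subgroup $J$, the support set $\Omega$ and the integer range $\Phi$ all coincide with those of $A$, so condition (b) of Definition \ref{LambdaH} (resp.\ Definition \ref{LambdaHInt}) is inherited verbatim. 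The same bijection preserves, line by line, the balance between positive and negative entries, so shiftability is preserved as well. This yields the desired (shiftable/integer) ${}^\lambda\H_t(m,n;s,k)$.
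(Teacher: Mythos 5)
Your proof is correct and follows essentially the same route as the paper: the same reduction map $(i,j)\mapsto(i\bmod m,\ j\bmod n)$, with injectivity established by first forcing the two cells onto the same diagonal via $\gcd(m,n)=b\geq d$ and then concluding via $\lcm(m,n)=N$, and the same transfer of row/column counts, sums, support and shiftability. No issues.
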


\begin{proof}
Write $s=d\bar s$ and $k=d \bar k$. Since $\gcd(\bar s, \bar k)=1$, the equality $ms=nk$  implies that $\bar k$ divides $m$
and $\bar s$ divides $n$.
Hence, we can write $m=\bar k c$ and  $n=\bar s c$. Note that $c=\gcd(m,n)$.
Let $A=(a_{i,j})$ be an (integer) diagonal ${}^\lambda \H_t(n\bar k; d )$
and suppose that the filled cells of $A$ belongs to the diagonals $D_0,D_1,\ldots,D_{d-1}$, 
where 
$$D_i=\{(x,y): 1\leq x,y\leq n\bar k,\; y-x\equiv i \pmod{n\bar k}\}.$$
We define a function $\psi$ between $\skel(A)$ and the set of the cells of  an empty array $B$ of size $m\times n$, as follows:
for every  $(i,j)\in \skel(A)$ with $i,j \in [1, n\bar k ]$, let $\psi((i,j))=(u,v)$,
where $u\in [1,m]$ and $v \in [1,n]$ are such that $u \equiv i \pmod m$ and $v \equiv j \pmod n$.

We prove that the function $\psi$ is injective.
In fact, take two filled cells $(x_1,y_1)\in D_{i_1}$ and $(x_2,y_2)\in D_{i_2}$ of $A$, where $0\leq i_1\leq i_2< d$.
Suppose $\psi((x_1,y_1))=\psi((x_2,y_2))$. Then  $x_2\equiv x_1 \pmod m$ and $y_2\equiv y_1\pmod n$ and hence $y_2-x_2 \equiv y_1-x_1 \pmod c$.
This implies  $i_2\equiv i_1 \pmod c$ and so
either $i_2=i_1$ or $i_2-i_1\geq  c=\frac{nd}{s}\geq d$, since $n\geq s$.
As the second possibility is excluded by the hypothesis $i_2-i_1< d$, the two cells  $(x_1,y_1),(x_2,y_2)$ belong
to the same diagonal $D_{i}$. 
From $y_2-x_2\equiv y_1-x_1\pmod{n\bar k}$, we get $x_2\equiv x_1 \pmod n$. Since $x_2\equiv x_1 \pmod m$, we conclude
that $x_2\equiv x_1 \pmod {\lcm(m,n)}$. Thus, from $\lcm(m,n)=\bar s c\bar k=n\bar k$ we get $(x_1,y_1)=(x_2,y_2)$.

Because of the injectivity of $\psi$, we can fill the cell $\psi((i,j))$ of $B$ with the entry $a_{i,j}$ of $A$,
obtaining a bijection from $\skel(A)$ to $\skel(B)$.
We now prove  that the pf array $B$ so constructed is an (integer) ${}^\lambda \H_t( m,n; s,k)$.
Clearly, $\E(A)=\E(B)$.  Also, each row of $B$ contains $d\cdot \frac{m\bar s}{m}=s$ filled cells and 
every column of $B$ contains $d\cdot \frac{n\bar k}{n}=k$ filled cells.
Finally, take the filled cells of a row of $A$: $(x,y_1), (x,y_2),\ldots, (x, y_{d})$.
Applying the function $\psi$, we obtain cells that belong to the same row $u$ of $B$.
So, the elements of this row  of $B$ are exactly the elements of $\bar s$ distinct rows of $A$.
Since the elements of any row of $A$ sum to zero in $\Z_{\frac{2nk}{\lambda}+t}$ (or in $\Z$), the corresponding elements of the 
row $u$ of $B$ sum to zero in $\Z_{\frac{2nk}{\lambda}+t}$ (respectively, in $\Z$). The same for the columns.

This shows that $B$ is an (integer) ${}^\lambda \H_t( m,n; s,k)$. 
If we make the further assumption that $A$ is a shiftable diagonal ${}^\lambda \H_t(n\bar k; d )$, 
then $d$ is even and every row/column of $A$ has $\frac{d}{2}$ positive entries and $\frac{d}{2}$ negative entries.
As remarked before, the elements of every row of $B$ are exactly the elements of $\bar s$ distinct rows of $A$. Hence, each row of $B$
has $\frac{s}{2}$ positive entries and $\frac{s}{2}$ negative entries. Similarly,
each column of $B$ has $\frac{k}{2}$ positive entries and $\frac{k}{2}$ negative entries, proving that $B$ is a shiftable
${}^\lambda \H_t( m,n; s,k)$. 
\end{proof}
 
For instance, we can take the integer diagonal $\H(12;3)$ of Figure \ref{fig2}, obtained by using \cite[Theorem 3.11]{ADDY}.
Following the proof of Theorem \ref{sq->rt}, we obtain the integer $\H(6,12; 6,3)$ of Figure \ref{fig6}.

\begin{figure}[ht]
\begin{footnotesize}
$$\begin{array}{|c|c|c|c|c|c|c|c|c|c|c|c|} \hline
-36 & 12 & 24 &  &  &  & -33 & 5 & 28 &  &  &   \\   \hline
 & -25 & -10 & 35 &  &  &  & -21 & 4 & 17 &  &  \\   \hline
&  & -14 & -9 & 23 &  &  &  & -32 & 3 & 29 &    \\   \hline
&  &  & -26 & -8 & 34 &  &  &  & -20 & 2 & 18   \\   \hline
30 &  &  &  & -15 & -7 & 22 &  &  &  & -31 & 1  \\   \hline
6 & 13 &  &  &  & -27 & 11 & 16 &  &  &  & -19  \\   \hline
  \end{array}$$
\end{footnotesize}
\caption{An integer $\H(6,12; 6,3)$.}\label{fig6}
\end{figure}
As an example of application of our reduction theorem, we get the following.

\begin{cor}\label{cor:main}
Let $m,n,s,k$ be four integers such that $3\leq s \leq n$, $3\leq k\leq m$ and $ms=nk$.
Suppose $d=\gcd(s,k)\geq 3$.
\begin{itemize}
\item[(1)] If $d\equiv 0 \pmod 4$, then there exists a shiftable $\H(m,n;s,k)$.
\item[(2)] If $d\equiv 1 \pmod 4$ and $n k\equiv 3\pmod 4$, then there exists an integer $\H(m,n;s,k)$.
\item[(3)] If $d\equiv 3 \pmod 4$ and $n k\equiv 0,3\pmod 4$, then there exists an integer $\H(m,n;s,k)$.
\end{itemize}
\end{cor}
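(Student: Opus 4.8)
The plan is to reduce everything to the square, diagonal case and then quote the known existence results for such squares. Specializing Theorem~\ref{sq->rt} to $\lambda=t=1$, so that a ${}^1\H_1$ is an ordinary Heffter array, it suffices to produce in each case a diagonal square Heffter array $\H(N;d)$ of the appropriate type --- shiftable in~(1), integer in~(2) and~(3) --- where I put $N=\frac{nk}{d}$; the theorem then transfers it to an $\H(m,n;s,k)$ of the same type.

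First I would assemble the arithmetic. Since $d=\gcd(s,k)$ divides $s$ and $s\leq n$, we have $d\leq n$, and combined with $k\geq d$ this gives $N=\frac{nk}{d}\geq n\geq d$, so $(N;d)$ really does admit a cyclically $d$-diagonal square of order $N$. The identity $Nd=nk$ is the hinge of the whole argument: the ``number of elements'' of the square is $Nd$, so every divisibility or congruence condition attached to the square-case construction becomes a condition on $nk$. In particular, when $d\equiv0\pmod 4$ we automatically have $nk=Nd\equiv0\pmod4$, and $d$ is even, which is exactly what the shiftable branch of Theorem~\ref{sq->rt} requires.

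With this in hand the three cases are immediate applications of \cite[Theorem~1.3]{MP}, which provides diagonal square Heffter arrays:
\begin{itemize}
\item[(1)] for $d\equiv0\pmod4$ it yields a shiftable diagonal $\H(N;d)$, and the shiftable branch of Theorem~\ref{sq->rt} returns a shiftable $\H(m,n;s,k)$;
\item[(2)] for $d\equiv1\pmod4$ --- which forces $d\geq5$ because $d\geq3$ --- with $nk=Nd\equiv3\pmod4$, it yields an integer diagonal $\H(N;d)$, hence an integer $\H(m,n;s,k)$;
\item[(3)] for $d\equiv3\pmod4$ with $nk=Nd\equiv0,3\pmod4$, it again yields an integer diagonal $\H(N;d)$, hence an integer $\H(m,n;s,k)$.
\end{itemize}

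I expect the only genuine point to verify is that the pair $(N;d)$ falls within the range of validity of \cite[Theorem~1.3]{MP} in each case, with no small value of $N$ escaping; the reduction itself carries no further difficulty, since it has already been established in Theorem~\ref{sq->rt}. It is worth stressing that the restriction to $nk\equiv3\pmod4$ in~(2) is not an artefact of the method but reflects the unavailability of a diagonal $\H(N;d)$ when $d\equiv1\pmod4$ and $Nd\equiv0\pmod4$ --- precisely the residual open case noted after Theorem~\ref{main}.
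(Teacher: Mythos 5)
Your proposal is correct and follows essentially the same route as the paper: verify $\frac{nk}{d}\geq d$, translate the congruence on $nk$ into one on $\frac{nk}{d}$, invoke the known existence of the corresponding (shiftable/integer) diagonal square Heffter arrays, and apply Theorem~\ref{sq->rt} with $\lambda=t=1$. The only discrepancy is bibliographic: the paper sources the three diagonal squares from \cite{ADDY}, \cite{MP}, \cite{CMPPHeffter} and \cite{DW} rather than from \cite[Theorem~1.3]{MP}, which in this paper is instead used to dispose of the even-$d$ cases in the proof of Theorem~\ref{main}.
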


\begin{proof}
We recall that there exists an integer diagonal $\H(a;b)$, with $a\geq b\geq 3$, in each of the following cases:
\begin{itemize}
\item[(i)] $b\equiv 0\pmod 4$ (shiftable), see \cite{ADDY} and \cite{MP};
\item[(ii)] $b\equiv 1\pmod 4$ and $a\equiv 3\pmod 4$, see \cite{CMPPHeffter,DW}.
\item[(iii)] $b\equiv 3\pmod 4$ and $a\equiv 0,1\pmod 4$, see \cite{ADDY}.
\end{itemize}
Now, it suffices to apply Theorem \ref{sq->rt} taking $b=d$ and $a=\frac{nk}{d}$
(note that $\frac{nk}{d}\geq d$ as $n\geq s\geq d$).
\end{proof}

\begin{ex}
We  construct the following integer $\H(8,12; 9,6)$ starting from an integer diagonal $\H(24;3)$:
$$\begin{array}{|c|c|c|c|c|c|c|c|c|c|c|c|}\hline
-72& 24 & 48 &  & -64 & 7 & 57 &  & -29 & -15 & 44 &   \\ \hline
& -49 & -22 & 71 &  & -40 & 6 & 34 &  & -53 & -14 & 67  \\ \hline 
43 &  & -26 & -21 & 47 &  & -63 & 5 & 58 &  & -30 & -13  \\ \hline
23 & 31 &  & -50 & -20 & 70 &  & -39 & 4 & 35 &  & -54  \\ \hline
-66 & 11 & 55 &  & -27 & -19 & 46 &  & -62 & 3 & 59 &   \\ \hline
 & -42 & 10 & 32 &  & -51 & -18 & 69 &  & -38 & 2 & 36   \\ \hline
60 &  & -65 & 9 & 56 &  & -28 & -17 & 45 &  & -61 & 1  \\ \hline
12 & 25 &  & -41 & 8 & 33 &  & -52 & -16 & 68 &  & -37  \\ \hline
  \end{array}$$
\end{ex}

\begin{proof}[Proof of Theorem \ref{main}]
By   \cite[Theorem 1.3]{MP}, we may assume that $d\geq 3$ is odd.
So, the result follows from Corollary \ref{cor:main}, items (2) and (3).
\end{proof}

We now apply our reduction theorem to the known results on diagonal $\lambda$-fold relative Heffter arrays.

\begin{cor}
Let $m,n,s,k$ be four integers such that $3\leq s \leq n$, $3\leq k\leq m$ and $ms=nk$.
Suppose $d=\gcd(s,k)\geq 3$.
\begin{itemize}
\item[(1)] If $d\equiv 1 \pmod 4$ and $n k\equiv 3\pmod 4$, then there exists an integer ${}^1 \H_d(m,n;s,k)$.
\item[(2)] If $d\equiv 3 \pmod 4$ and $n k\equiv 0,1\pmod 4$, then there exists an integer ${}^1 \H_d(m,n;s,k)$.
\item[(3)] If $d=3$ and $nk$ is odd, then there exists an integer ${}^1\H_\frac{nk}{3}(m,n;s,k)$ and an integer
${}^1\H_{\frac{2nk}{3}}(m,n;s,k)$.
\item[(4)] If $d=3$ and $nk\equiv 3 \pmod 4$, then there exists a (non-integer)  ${}^2 \H_1 (m,n;s,k)$.
\item[(5)] If $d=3$ and $nk\equiv 1 \pmod 4$, then there exists a  (non-integer) ${}^3 \H_1 (m,n;s,k)$.
\item[(6)] If $d=3$, $nk$ is odd, and $\lambda$ divides $n$, then there exists a  (non-integer) ${}^\lambda \H_{\frac{n}{\lambda}}(m,n;s,k)$.
\item[(7)] If $d=3$, $nk$ is odd, and $\lambda$ divides $2n$, then there exists a  (non-integer) ${}^\lambda \H_{\frac{2n}{\lambda }}(m,n;s,k)$.
\item[(8)] If $d=5$ and $nk\equiv 3 \pmod 4$, then there exists a (non-integer)  ${}^2 \H_1 (m,n;s,k)$.
\end{itemize}
\end{cor}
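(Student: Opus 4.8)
The plan is to treat all eight items by a single mechanism: push a known \emph{square} diagonal construction through Theorem~\ref{sq->rt}. Write $d=\gcd(s,k)$ and put $a=\frac{nk}{d}$. As in the proof of Corollary~\ref{cor:main} one has $a\geq d$, because $n\geq s\geq d$ and $k\geq d$ force $nk\geq d^2$. The key observation is that for a square array ${}^\lambda\H_t(a;d)$ the quantity playing the role of ``$2nk$'' in Definition~\ref{LambdaH} equals $2\cdot a\cdot d=2nk$; hence the hypotheses $\lambda\mid 2nk$ and $t\mid\frac{2nk}{\lambda}$ required by Theorem~\ref{sq->rt} are \emph{exactly} the admissibility conditions for the square object, so no mismatch can arise. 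Consequently, for each item it suffices to produce a diagonal ${}^\lambda\H_t(a;d)$ of the prescribed flavour (integer or non-integer) and then invoke Theorem~\ref{sq->rt} to transport it to the desired ${}^\lambda\H_t(m,n;s,k)$.

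First I would set up the dictionary between each item and its square target. Items (1) and (2) call for an integer diagonal ${}^1\H_d(a;d)$; item (3) for integer diagonal arrays ${}^1\H_{a}(a;3)$ and ${}^1\H_{2a}(a;3)$ with $a$ odd; items (4), (5) and (8) for non-integer diagonal arrays ${}^2\H_1(a;3)$, ${}^3\H_1(a;3)$ and ${}^2\H_1(a;5)$ respectively; and items (6), (7) for non-integer diagonal arrays ${}^\lambda\H_{n/\lambda}(a;3)$ and ${}^\lambda\H_{2n/\lambda}(a;3)$. Each of these \emph{cyclically $d$-diagonal} square arrays is furnished by the existing literature on relative Heffter arrays \cite{SA,RelH,RelHBiem,MP2}; the first task is therefore to isolate, in each source, the precise statement that delivers a diagonal array with the stated $\lambda$ and $t$, and to cite only those.

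Next I would clear the divisibility side-conditions of Theorem~\ref{sq->rt}. For items (1)--(5) and (8) they are immediate: since $d\mid k\mid nk$ one gets $\lambda\mid 2nk$ at once, and $t\mid\frac{2nk}{\lambda}$ reduces to $d\mid 2nk$ in (1),(2), to $a\mid 6a$ and $2a\mid 6a$ in (3) (where $2nk=6a$), and is trivial ($t=1$) in (4),(5),(8). Only items (6) and (7) need a second look, because there the subgroup order $t=\frac{n}{\lambda}$ (respectively $t=\frac{2n}{\lambda}$) is written through $n$ rather than through the square size $a$; here one checks $\lambda\mid n\mid 2nk$ (respectively $\lambda\mid 2n\mid 2nk$) and that $\frac{n}{\lambda}\mid\frac{2nk}{\lambda}$ (respectively $\frac{2n}{\lambda}\mid\frac{2nk}{\lambda}$), both of which collapse to $n\mid 2nk$.

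The genuine work, and the only real obstacle, is bookkeeping rather than new mathematics. I must verify that the cited square constructions are supported on exactly $d$ consecutive diagonals, since diagonality is precisely what Theorem~\ref{sq->rt} consumes; and I must translate each arithmetic hypothesis stated on $nk$ modulo $4$ into the hypothesis on $a=\frac{nk}{d}$ modulo $4$ under which the square result is quoted. The latter is routine because $d$ is odd in every one of the eight cases, so $d$ is invertible modulo $4$ and the residue of $a$ is determined by that of $nk$. Once the references are matched and these checks are in place, Theorem~\ref{sq->rt} yields every conclusion with no further computation.
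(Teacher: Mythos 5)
Your proposal is correct and follows essentially the same route as the paper: the paper's proof simply lists the known diagonal square arrays (integer ${}^1\H_b(a;b)$ from \cite{RelH}, integer ${}^1\H_a(a;3)$ and ${}^1\H_{2a}(a;3)$ from \cite{RelHBiem}, and the non-integer ${}^2\H_1(a;3)$, ${}^3\H_1(a;3)$, ${}^\lambda\H_{n/\lambda}(a;3)$, ${}^\lambda\H_{2n/\lambda}(a;3)$, ${}^2\H_1(a;5)$ from \cite{SA}) and applies Theorem~\ref{sq->rt} with $b=d$ and $a=\frac{nk}{d}$, exactly the dictionary and the mod-$4$ translation (using that $d$ is odd, hence invertible mod $4$) that you describe. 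The only difference is that you leave the literature citations as a to-do while the paper spells them out, but the mechanism and all the checks you identify coincide with the paper's argument.
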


\begin{proof}
There exists an integer diagonal ${}^1 \H_b(a;b)$, with $a\geq b\geq 3$,   when 
$b\equiv 1\pmod 4$ and $a\equiv 3\pmod 4$, and when $b\equiv 3\pmod 4$ and $a\equiv 0,3\pmod 4$, see  \cite{RelH}.
By \cite{RelHBiem},  there exist an integer diagonal
${}^1\H_a(a;3)$ and an integer diagonal ${}^1\H_{2a}(a;3)$  when $a\geq 3$ is odd.
By \cite{SA}, there exist the following diagonal non-integer arrays:
\begin{itemize}
 \item[(i)]  ${}^2 \H_1(a;3)$ when $a\geq 5$ is such that $a\equiv 1 \pmod 4$;
 \item[(ii)] ${}^3 \H_1(a;3)$ when $a\geq 3$ is such that $a\equiv 3 \pmod 4$,
\item[(iii)] ${}^\lambda \H_{\frac{n}{\lambda}}(a;3)$ when $a\geq 3$ is odd and  $\lambda$ divides $n$;
\item[(iv)]  ${}^\lambda \H_{\frac{2n}{\lambda}}(a;3)$ when  $a\geq 3$ is odd and  $\lambda$ divides $2n$;
 \item[(v)]  ${}^2 \H_1(a;5)$ when $a\geq 7$ is such that $a\equiv 3\pmod 4$. 
\end{itemize}
Now, it suffices to apply Theorem \ref{sq->rt} taking $b=d$ and $a=\frac{nk}{d}$.
\end{proof}

\section{Direct constructions when $k$ is odd}\label{s0}

In this section, we construct integer Heffter arrays $\H(m,n;s,k)$ when $k\neq 5$ is odd and $s\equiv 0 \pmod 4$.
Note that from the necessary condition $ms=nk$ we obtain $n\equiv 0\pmod 4$.
By Theorem \ref{main} we could also assume $\gcd(s,k)=1$, however this hypothesis is not necessary for our constructions.

We begin considering the case $k=3$. In \cite{ABD} the authors proved the existence of an integer $\H(n,3;3,n)$ using blocks of size 
$3\times 8$ and blocks of size $3\times 12$. 
Our first step is to rearrange the elements of these blocks, obtained by working with Skolem sequences \cite{Skol}, in order
to produce $3\times 4$ blocks whose rows and columns sum to zero.
So, for any fixed integer $\mu\geq 0$, we can define:
$$\begin{array}{rcl}
A_1 & =& \begin{array}{|c|c|c|c|}\hline
4\mu+4 & -8\mu-7 & 4\mu+5 & -2 \\ \hline
8\mu+9 & 18\mu+18 & -14\mu-15 & -12\mu-12 \\ \hline
-12\mu-13 & -10\mu-11 & 10\mu+10 & 12\mu+14 \\ \hline
\end{array}\;,\\[14pt]
A_2   & =& \begin{array}{|c|c|c|c|}\hline
4\mu+6 & 4\mu+3 & -1 & -8\mu-8 \\ \hline
18\mu+17  & 18\mu+19 & -20\mu-20 & -16\mu-16 \\ \hline
-22\mu-23 & -22\mu-22 & 20\mu+21 & 24\mu+24 \\ \hline
  \end{array}\;,\\[14pt]
B_a   & =& \begin{array}{|c|c|c|c|}\hline
8\mu+10+2a & 14\mu+14-2a & -8\mu-11-2a & -14\mu-13+2a \\ \hline
8\mu+5-4a & -4\mu-2+4a & -8\mu-3+4a & 4\mu-4a \\ \hline
-16\mu-15+2a & -10\mu-12-2a & 16\mu+14-2a & 10\mu+13+2a \\\hline
  \end{array}\;,\\[14pt]
C_a & =& \begin{array}{|c|c|c|c|}\hline
4\mu+1-4a & -8\mu-6+4a & -4\mu+1+4a & 8\mu+4-4a \\ \hline
18\mu+20+2a & -16\mu-17-2a & -18\mu-21-2a & 16\mu+18+2a \\ \hline
-22\mu-21+2a & 24\mu+23-2a & 22\mu+20-2a & -24\mu-22+2a \\ \hline
\end{array}\;, 
 \end{array}$$
with $a\in [0,\mu-1]$. The sequence
\begin{equation}\label{seqU1}
\mathcal{A}_1(\mu) = \left\{
\begin{array}{ll}
(A_1,A_2) & \textrm{if } \mu=0,\\
(A_1,A_2,B_0,B_1,\ldots,B_{\mu-1}, C_0, C_1,\ldots,C_{\mu-1})&  \textrm{if } \mu\geq 1
\end{array}\right.
\end{equation}
has length $2\mu+2$ and  support equals to $\supp(\mathcal{A}_1(\mu))=[1,24\mu+24]$.
In fact, 
$$\begin{array}{rcl}
\bigcup\limits_{a=1}^{2} \supp(A_a) & =& [1,2]\cup [4\mu+3,4\mu+6], \cup [8\mu+7,8\mu+9] \cup [10\mu+10,10\mu+11]\\
&& \cup [12\mu+12,12\mu+14] \cup\{14\mu+15 \} \cup \{16\mu+16\} \cup [18\mu+17,18\mu+19]  \\
&& \cup [20\mu+20, 20\mu+21] \cup [22\mu+22 , 22\mu+23] \cup \{24\mu+24\};\\
\bigcup\limits_{a=0}^{\mu-1} \supp(B_a) & = & \{4,6,\ldots,4\mu+2\} \cup \{4\mu+7,4\mu+9,\ldots,8\mu+5\}\cup [8\mu+10,10\mu+9]\\
&& \cup [10\mu+12,12\mu+11] \cup [12\mu+15,14\mu+14] \cup [14\mu+16,  16\mu+15];\\
\bigcup\limits_{a=0}^{\mu-1} \supp(C_a) & = & \{3,5,\ldots,4\mu+1\} \cup \{4\mu+8,4\mu+10,\ldots,8\mu+6\}\cup [16\mu+17,18\mu+16]\\
&& \cup[18\mu+20,20\mu+19] \cup [20\mu+22,22\mu+21] \cup [22\mu+24,  24\mu+23].
\end{array}$$
Now, for any fixed integer $\nu\geq 0$ define:
$$\begin{array}{rcl}
E_1 & =& \begin{array}{|c|c|c|c|}\hline
4\nu+1 & -4\nu-5 & -4\nu-8 & 4\nu +12   \\\hline
18\nu+30 & -18\nu-28 & -18\nu-26 & 18\nu+24 \\\hline
-22\nu-31 & 22\nu+33 & 22\nu+34 & -22\nu-36 \\\hline
\end{array}\;,\\[14pt]
E_2 & =& \begin{array}{|c|c|c|c|}\hline
4\nu+ 6 & -18\nu-25 & 4\nu+2 & 10\nu+17    \\\hline
 8\nu+13 & -4\nu-10 & 10\nu+18 & -14\nu-21 \\\hline
 -12\nu-19 & 22\nu+35 & -14\nu-20 & 4\nu+4 \\\hline
\end{array}\;,\\[14pt]
% \end{array}$$
% $$\begin{array}{rcl}
E_3 & =& \begin{array}{|c|c|c|c|}\hline
 4\nu +7 & 10\nu+14 & 8\nu+11 & -22\nu-32    \\\hline
10\nu+15 & 4\nu+9 & -18\nu-27 & 4\nu+3 \\\hline
-14\nu-22 & -14\nu-23 & 10\nu+16 & 18\nu+29 \\\hline
\end{array}\;,\\[14pt]
F_a   & =& \begin{array}{|c|c|c|c|}\hline
 4\nu-3-4a & -4\nu+2+4a  &  -4\nu+1+4a  &  4\nu-4a  \\ \hline
  18\nu+32+2a &  -10\nu-20-2a & -18\nu-31-2a & 10\nu+19+2a  \\\hline
  -22\nu-29+2a & 14\nu+18-2a &  22\nu+30-2a & -14\nu-19+2a  \\ \hline
  \end{array}\;,\\[14pt]
G_a   & =& \begin{array}{|c|c|c|c|}\hline
8\nu + 7 -4a & -8\nu-9+4a & -8\nu-10+4a & 8\nu+12-4a \\ \hline
8\nu+15+2a & -8\nu-14-2a & -16\nu-25-2a & 16\nu+24+2a \\ \hline
-16\nu-22+2a & 16\nu+23-2a & 24\nu+35-2a & -24\nu-36+2a \\ \hline
\end{array}\;, 
 \end{array}$$
with $a \in  [0, \nu-1]$. The sequence
\begin{equation}\label{seqU2}
\mathcal{A}_2(\nu) = \left\{
\begin{array}{ll}
(E_1,E_2,E_3) & \textrm{if } \nu=0,\\
(E_1,E_2,E_3,F_0,F_1,\ldots,F_{\nu-1}, G_0, G_1,\ldots,G_{\nu-1})&  \textrm{if } \nu\geq 1
\end{array}\right.
\end{equation}
has length $2\nu+3$ and  support equals to $\supp(\mathcal{A}_2(\nu))=[1,24\nu+36]$.
In fact, 
$$\begin{array}{rcl}
\bigcup\limits_{a=1}^{2} \supp(E_a) & =& [4\nu+1,4\nu+10]\cup \{4\nu+12\}\cup \{8\nu+11\}\cup \{8\nu+13\}\\
&& \cup [10\nu+14,10\nu+18] \cup \{12\nu+19\} \cup [14\nu+20,14\nu+23]\\
&& \cup [18\nu+24,18\nu+30]\cup [22\nu+31, 22\nu+36];\\
\bigcup\limits_{a=0}^{\mu-1} \supp(F_a) & = & [1,4\nu] \cup [10\nu+19, 12\nu+18] \cup[12\nu+20, 14\nu+19] \\
&&\cup [18\nu+31, 20\nu+30] \cup [20\nu+31, 22\nu+30];\\
\end{array}$$
$$\begin{array}{rcl}
\bigcup\limits_{a=0}^{\mu-1} \supp(G_a) & = &\{4\nu+11,4\nu+13,\ldots,8\nu+9\}\cup\{4\nu+14,4\nu+16,\ldots,8\nu+12\}\\
&& [8\nu+14, 10\nu+13] \cup [14\nu+24, 16\nu + 23] \cup [16\nu+24,18\nu+23]\\
&&\cup [22\nu+37, 24\nu+36].
\end{array}$$

\begin{proof}[Proof of Theorem \ref{main_odd}]
Our first step is to construct a sequence $\mathcal{Z}=(Z_0,Z_1,\ldots,Z_{\frac{n-4}{4}})$ of length $\frac{n}{4}$,
such that its elements $Z_i$ are $k\times 4$ blocks whose rows and columns sum to zero, and such that $\supp(\mathcal{Z})=[1,nk]$.

To this intent, we define a sequence $\B=(U_0,U_1,\ldots,U_{\frac{n-4}{4}})$ of length $\frac{n}{4}$ consisting of blocks of size $3\times 4$, 
as follows.
If $n\equiv 0 \pmod 8$, we take the sequence $\mathcal{B}=\mathcal{A}_1\left(\frac{n-8}{8}\right)$ as defined in \eqref{seqU1}.
If $n\geq 12$ and $n \equiv 4 \pmod 8$, we set $\mathcal{B}=\mathcal{A}_2\left(\frac{n-12}{8}\right)$ as defined in \eqref{seqU2}.
Finally, if $n=4$, the sequence $\mathcal{B}$ consists only of the block
$\H(3,4;4,3)=\begin{array}{|c|c|c|c|}\hline
1 & 2 & 3 & -6\\ \hline
8 & -12 & -7 & 11\\ \hline
-9 & 10 & 4 & -5 \\ \hline
\end{array}$.
Note that, in each case,  $\supp(\mathcal{B})=[1,3n]$.

If $k=3$, set $Z_i=U_i$ for all $i=0,\ldots,\frac{n-4}{4}$. It is clear that $\mathcal{Z}$ satisfies all our requirements.
Assume $k\geq 7$. To obtain our block $Z_i$ we use the block $U_i$ for the first three rows, and we need a shiftable $\H(k-3,4;4,k-3)$
to construct the remaining $k-3$ rows. The existence of this Heffter array was proved in \cite{ABD,MP}, but here, 
for the reader's sake, we prefer to give an explicit construction. So, consider the following shiftable blocks:
$$Q_4=\begin{array}{|c|c|c|c|}\hline
1 & -2 & -3 & 4 \\ \hline
-5 & 6 & 7 & -8 \\ \hline
-9 & 10 & 11 & -12 \\\hline
13 & -14 & -15 & 16 \\ \hline
    \end{array}\;, \quad 
    Q_6=\begin{array}{|c|c|c|c|}\hline
1 & -14 & -3 & 16 \\ \hline
-2 & 13 & 4 & -15 \\ \hline
5 & -18 & -7 & 20 \\ \hline
-6 & 17 & 8 & -19 \\ \hline
-9 & 23 & 10 & -24 \\ \hline
11 & -21 & -12 & 22 \\ \hline
\end{array}\;.$$
Note that $Q_4$ is an integer $\H(4,4;4,4)$ and $Q_6$ is an integer $\H(6,4;4,6)$.
For all fixed $h\geq 0$ we take the $(4h+4)\times 4$ block $V_h$ and the $(4h+6)\times 4$ block $W_h$, as follows:
$$V_h=\begin{array}{|c|} \hline
Q_4 \\\hline
Q_4\pm 16 \\\hline
Q_4\pm 32 \\\hline
\vdots \\ \hline
Q_4 \pm 16h   \\ \hline
\end{array}\;,\quad 
W_h=\begin{array}{|c|} \hline
Q_6 \\\hline
Q_4\pm  24\\\hline
Q_4\pm  40\\\hline
\vdots \\ \hline
Q_4 \pm (16h+8)   \\ \hline
\end{array}\;.$$
So, the blocks $V_h$ and $W_h$ have rows and columns that sum to zero. Furthermore,
$\supp(V_h)=[1,16h+16]$ and $\supp(W_h)=[1,16h+24]$ (clearly, $V_0=Q_4$ and $W_0=Q_6$).

If $k\equiv 3 \pmod 4$, write $k=4q+7$  and define
$Z_i=\begin{array}{|c|}\hline U_i \\\hline V_q\pm (3n+(16q+16)i)  \\\hline\end{array}$.
Note that 
$$\bigcup_{i=0}^{\frac{n-4}{4}}\supp\left(V_q\pm (3n+(16q+16)i) \right)=\bigcup_{i=0}^{\frac{n-4}{4}} [3n+1+16(q+1)i, 3n+16(q+1)(i+1) ].$$
If $k\equiv 1 \pmod 4$, write $k=4q+9$  and define
$Z_i=\begin{array}{|c|}\hline U_i \\\hline W_q\pm (3n+(16q+24)i)  \\\hline\end{array}$.
We have
$$\bigcup_{i=0}^{\frac{n-4}{4}} \supp\left( W_q\pm (3n+(16q+24)i) \right)=\bigcup_{i=0}^{\frac{n-4}{4}} [3n+1+8(2q+3)i, 3n+8(2q+3)(i+1) ].$$
In both case, we conclude that $\supp(\mathcal{Z})=[1,3n]\cup [3n+1, kn] =[1,kn]$.

Hence, for all $k\neq 5$ odd, we were able to construct the required sequence $\mathcal{Z}$. 
To obtain an integer $\H(m,n;s,k)$, we  fill the cell $( a+ki, b+4i)$ of an array $H$ of size $m\times n$ with the 
element of the cell $(a,b)$ of $Z_i$, for all $i=0,\ldots,\frac{n-4}{4}$.
Clearly, every column of $H$ contains exactly $k$ filled cells (a column of a unique block $Z_i$).
Also, every row of $H$ contains $s$ filled cells (a row from each $\frac{s}{4}$ distinct blocks $Z_j$),
since $k\frac{n}{4}=m \frac{s}{4}$.
Hence, 
$\supp(H)=\supp(\mathcal{Z})=[1,nk]$ and, since the elements of each row/column of $Z_i$ sum to $0$, the same holds for $H$. 
\end{proof}

Following the proof of Theorem \ref{main_odd}, we can construct an integer $\H(14,8;4,7)$.
We start taking the following two $7\times 4$ blocks:
$$Z_0=\begin{array}{|c|c|c|c|}\hline
4 & -7 & 5 & -2 \\\hline
9 & 18 & -15 & -12 \\\hline
-13 & -11 & 10 & 14 \\ \hline
25 & -26 & -27 & 28 \\\hline
-29 & 30 & 31 & -32 \\\hline
-33 & 34 & 35 & -36 \\ \hline
37 & -38 & -39 & 40 \\\hline
\end{array}\;,\quad 
Z_1=\begin{array}{|c|c|c|c|}\hline
6 &3 & -1 & -8 \\\hline
17 & 19 & -20 & -16 \\\hline
-23 & -22 & 21 & 24 \\\hline
41 & -42 & -43 & 44 \\\hline
-45 & 46 & 47 & -48 \\\hline
-49 & 50 & 51 & -52 \\\hline
53 & -54 & -55 & 56 \\\hline
\end{array}\,;$$
then,
$$\begin{array}{|c|c|c|c|c|c|c|c|}\hline
4 & -7 & 5 & -2 &&&& \\\hline
9 & 18 & -15 & -12 &&&& \\\hline
 -13 & -11 & 10 & 14 &&&& \\ \hline
25 & -26 & -27 & 28 &&&& \\\hline
-29 & 30 & 31 & -32 &&&& \\\hline
-33 & 34 & 35 & -36 &&&& \\ \hline
37 & -38 & -39 & 40 &&&&  \\\hline  
&&&& 6 &3 & -1 & -8 \\\hline
&&&& 17 & 19 & -20 & -16 \\\hline
&&&& -23 & -22 & 21 & 24 \\\hline
&&&& 41 & -42 & -43 & 44 \\\hline
&&&& -45 & 46 & 47 & -48 \\\hline
&&&& -49 & 50 & 51 & -52 \\\hline
&&&& 53 & -54 & -55 & 56 \\\hline
  \end{array}
 $$
is an integer $\H(14,8;4,7)$. An integer $\H(28,16; 12,21)$ is shown in Figure \ref{fig1}, where we highlighted the blocks
$U_0,U_1,U_2,U_3$.

\begin{figure}
\rotatebox{90}{
\begin{footnotesize}
$\begin{array}{|c|c|c|c|c|c|c|c|c|c|c|c|c|c|c|c|c|c|c|c|}\hline
\omb 8 &\omb -15 &\omb 9 &\omb -2 & -129 & 143 & 130 & -144 & -237 & 238 & 239 & -240 &  &  &  & \\ \hline
\omb 17 &\omb  36 & \omb -29 &\omb  -24 & 131 & -141 & -132 & 142 & -241 & 242 & 243 & -244 &  &  &  &\\ \hline
\omb -25 & \omb -21 &\omb  20 & \omb 26 & 145 & -146 & -147 & 148 & 245 & -246 & -247 & 248 &  &  &  & \\\hline
49 & -62 & -51 & 64 & -149 & 150 & 151 & -152 & 249 & -250 & -251 & 252 &  &  &  & \\\hline
-50 & 61 & 52 & -63 & -153 & 154 & 155 & -156 & -253 & 254 & 255 & -256 &  &  &  & \\\hline
53 & -66 & -55 & 68 & 157 & -158 & -159 & 160 & -257 & 258 & 259 & -260 &  &  &  & \\\hline
-54 & 65 & 56 & -67 & 161 & -162 & -163 & 164 & 261 & -262 & -263 & 264 &  &  &  & \\\hline
-57 & 71 & 58 & -72 & -165 & 166 & 167 & -168 &  &  &  &  & \omb 5 & \omb -14 & \omb -3 & \omb 12\\\hline
59 & -69 & -60 & 70 & -169 & 170 & 171 & -172 &  &  &  &  & \omb 38 & \omb -33 & \omb -39 & \omb 34\\\hline
73 & -74 & -75 & 76 & 173 & -174 & -175 & 176 &  &  &  &  & \omb -43 &\omb  47 & \omb 42 & \omb -46\\\hline
-77 & 78 & 79 & -80 & 177 & -178 & -179 & 180 &  &  &  &  & 265 & -278 & -267 & 280\\\hline
-81 & 82 & 83 & -84 & -181 & 182 & 183 & -184 &  &  &  &  & -266 & 277 & 268 & -279\\\hline
85 & -86 & -87 & 88 & -185 & 186 & 187 & -188 &  &  &  &  & 269 & -282 & -271 & 284\\\hline
89 & -90 & -91 & 92 & 189 & -190 & -191 & 192 &  &  &  &  & -270 & 281 & 272 & -283\\\hline
-93 & 94 & 95 & -96 &  &  &  &  & \omb 18 &\omb  28 & \omb -19 &\omb  -27 & -273 & 287 & 274 & -288\\\hline
-97 & 98 & 99 & -100 &  &  &  &  & \omb 13 &\omb  -6 & \omb -11 & \omb 4 & 275 & -285 & -276 & 286\\\hline
101 & -102 & -103 & 104 &  &  &  &  &\omb  -31 &\omb  -22 & \omb 30 & \omb 23 & 289 & -290 & -291 & 292\\\hline
105 & -106 & -107 & 108 &  &  &  &  & 193 & -206 & -195 & 208 & -293 & 294 & 295 & -296\\\hline
-109 & 110 & 111 & -112 &  &  &  &  & -194 & 205 & 196 & -207 & -297 & 298 & 299 & -300\\\hline
-113 & 114 & 115 & -116 &  &  &  &  & 197 & -210 & -199 & 212 & 301 & -302 & -303 & 304\\\hline
117 & -118 & -119 & 120 &  &  &  &  & -198 & 209 & 200 & -211 & 305 & -306 & -307 & 308\\\hline
 &  &  &  & \omb 10 & \omb 7 & \omb -1 & \omb -16 & -201 & 215 & 202 & -216 & -309 & 310 & 311 & -312\\\hline
 &  &  &  & \omb 35 & \omb 37 & \omb -40 & \omb -32 & 203 & -213 & -204 & 214 & -313 & 314 & 315 & -316\\\hline
 &  &  &  & \omb -45 &\omb  -44 & \omb 41 &\omb  48 & 217 & -218 & -219 & 220 & 317 & -318 & -319 & 320\\\hline
 &  &  &  & 121 & -134 & -123 & 136 & -221 & 222 & 223 & -224 & 321 & -322 & -323 & 324\\\hline
 &  &  &  & -122 & 133 & 124 & -135 & -225 & 226 & 227 & -228 & -325 & 326 & 327 & -328\\\hline
 &  &  &  & 125 & -138 & -127 & 140 & 229 & -230 & -231 & 232 & -329 & 330 & 331 & -332\\\hline
 &  &  &  & -126 & 137 & 128 & -139 & h233 & -234 & -235 & 236 & 333 & -334 & -335 & 336\\\hline
\end{array}$
\end{footnotesize}}
\caption{An integer  $\H(28,16; 12,21)$.}\label{fig1}
\end{figure}

\section{Signed magic arrays and magic rectangles}\label{sec:magic}

The existence of  magic rectangles and signed magic arrays has been considered mainly in the square case, in the tight case,
or when each column contains two or three filled cells, see \cite{H1,H2,magic2,rect3,rect1,magic3,magicDM}.
We can provide a reduction theorem also for  magic rectangles and signed magic arrays. 
In fact,  easily adapting the proof of Theorem \ref{sq->rt}, we obtain the following result, where 
we denote a magic square $\MR(n,n;k,k)$ and a signed magic square $\SMA(n,n;k,k)$ by $\MR(n;k)$ and $\SMA(n;k)$, respectively.
Furthermore, we say that an $\MR(n;k)$ (or an $\SMA(n;k)$) is diagonal if it is cyclically $k$-diagonal.

\begin{thm}\label{magic sq->rt}
Let $m,n,s,k$ be four integers such that $2\leq s \leq n$, $2\leq k\leq m$ and $ms=nk$.
Set $d=\gcd(s,k)$.
If  there exists a (shiftable) diagonal $\SMA\left(\frac{nk}{d};d \right)$,
then there exists a (shiftable)  $\SMA( m,n; s,k)$.
If  there exists a diagonal $\MR\left(\frac{nk}{d}; d \right)$,
then there exists an  $\MR( m,n; s,k)$.
\end{thm}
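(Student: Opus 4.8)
The plan is to reproduce the proof of Theorem \ref{sq->rt} almost verbatim, the point being that the construction of the collapsing map and the verification of condition (a) are completely insensitive to the nature of the entries; only conditions (b) and (c) require a (minor) adaptation. As before, I would write $s=d\bar s$ and $k=d\bar k$ with $\gcd(\bar s,\bar k)=1$, deduce from $ms=nk$ that $\bar k\mid m$ and $\bar s\mid n$, and set $m=\bar k c$, $n=\bar s c$ with $c=\gcd(m,n)$. Starting from a diagonal $\SMA\!\left(\frac{nk}{d};d\right)$ (respectively a diagonal $\MR\!\left(\frac{nk}{d};d\right)$) $A$, whose filled cells occupy the diagonals $D_0,\ldots,D_{d-1}$ of a square of order $\frac{nk}{d}=n\bar k$, I would define the same map $\psi$ sending $(i,j)\in\skel(A)$ to the cell $(u,v)$ of an empty $m\times n$ array $B$, where $u\in[1,m]$, $v\in[1,n]$ satisfy $u\equiv i\pmod m$ and $v\equiv j\pmod n$.

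The injectivity of $\psi$ is then established exactly as in Theorem \ref{sq->rt}: that argument uses only the diagonal shape of $A$ together with the arithmetic facts $c\geq d$ (from $n\geq s$) and $\lcm(m,n)=n\bar k$, none of which refers to the entries. Hence $\psi$ is a bijection from $\skel(A)$ to $\skel(B)$, and transporting the entries of $A$ along $\psi$ gives $\E(A)=\E(B)$. Condition (a) follows verbatim: each row of $B$ collects the entries of $\bar s$ distinct rows of $A$ and so contains $\bar s\cdot d=s$ filled cells, while each column of $B$ contains $\bar k\cdot d=k$ filled cells.

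It remains to check condition (c) (and (b)), and this is the only place where the two cases diverge from the Heffter setting. For the $\SMA$ case every line of $A$ sums to $0$, so each row of $B$, being the union of $\bar s$ zero-sum rows of $A$, again sums to $0$, and likewise for columns; condition (b) holds since $\E(A)=\E(B)$ and the element set $\Omega$ of $\SMA\!\left(\frac{nk}{d};d\right)$ coincides with that of $\SMA(m,n;s,k)$, both being governed by the product $\frac{nk}{d}\cdot d=nk$ (of the same parity). The shiftable refinement is identical: if $A$ is shiftable then $d$ is even, each line of $A$ has $\frac d2$ positive and $\frac d2$ negative entries, and collapsing yields $\frac s2$ (respectively $\frac k2$) entries of each sign per row (respectively column) of $B$. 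For the $\MR$ case the line sums of $A$ are constants $c_1'$ (rows) and $c_2'$ (columns); a row of $B$ is the union of $\bar s$ rows of $A$, hence sums to the constant $\bar s\,c_1'$, uniformly over all rows, and every column sums to $\bar k\,c_2'$, so the constant-sum property survives, with $\{0,1,\ldots,nk-1\}$ again matching on both sides.

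I do not expect any serious obstacle: the delicate step of the original proof, namely the injectivity of $\psi$, carries over unchanged. The single point deserving an explicit remark — and the only spot where the argument is not a literal copy of Theorem \ref{sq->rt} — is the $\MR$ computation of the common line sum, where one must note that the collapsed sum is $\bar s$ (respectively $\bar k$) times that of $A$ rather than $0$, and that this multiple is the same for every line precisely because each line of $B$ pulls back to exactly $\bar s$ or $\bar k$ lines of $A$.
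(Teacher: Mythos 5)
Your proposal is correct and follows exactly the route the paper intends: the paper's own ``proof'' of this theorem consists of the single remark that one easily adapts the proof of Theorem \ref{sq->rt}, and your adaptation (identical collapsing map $\psi$, identical injectivity argument, with only conditions (b) and (c) reinterpreted — zero sums for the $\SMA$ case, constant sums $\bar s\,c_1'$ and $\bar k\,c_2'$ for the $\MR$ case) is precisely what is needed. No gaps.
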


Now, we consider the existence of diagonal signed magic arrays, starting with the following constructions.

\begin{lem}\label{3diag_odd}
There exists a diagonal $\SMA(a;3)$ for all odd $a\geq 3$. 
\end{lem}

\begin{proof}
A diagonal $\SMA(a;3)$, when $a=2g+1$, can be obtained by taking a pf array $A$
whose rows $R_1,R_2,\ldots,R_a$ are as follows.
For each $\ell \in [0,g]$, we fill the row $R_{1+2\ell}$ with the entries
$$\E(1+2\ell,1+2\ell) =-(2g+1)+\ell, \quad \E(1+2\ell, 2+2\ell) = g-2\ell, \quad  
\E(1+2\ell, 3+2\ell) = (g+1)+\ell,$$
and, for each $\ell \in [0,g-1]$, we fill the row $R_{2+2\ell}$ with the entries
$$\E(2+2\ell,2+2\ell)=-(3g+1)+\ell,\quad \E(2+2\ell,3+2\ell)=(g-1)-2\ell,\quad
\E(2+2\ell,4+2\ell)= (2g+2)+\ell.$$
It is clear that we filled the diagonals $D_{0},D_{1},D_{2}$ in such a way that the elements of every row of $A$ sum to zero.
Furthermore, $\E(D_{0})= [-(3g+1),-(g+1)]$,  $\E(D_1)=[-g,g]$ and  $\E(D_2)=[g+1, 3g+1 ]$.
We now describe the columns $C_1,C_2,\ldots,C_a$  of $A$. 
For each $\ell \in [0,g]$, we filled the column $C_{2+2\ell}$ with the entries
$$\E(2\ell, 2+2\ell)= (2g+1)+\ell, \quad  \E(1+2\ell, 2+2\ell) =g-2\ell, \quad 
\E(2+2\ell, 2+2\ell)= -(3g+1)+\ell.$$
Note that the column $C_1$ corresponds to the value $\ell=g$, since the indexes are taken modulo $a$. Also,
for each $\ell\in [0,g-1]$, we filled the column $C_{3+2\ell}$ with the entries
$$\E(1+2\ell, 3+2\ell)=(g+1)+\ell, \quad \E(2+2\ell, 3+2\ell)= (g-1)-2\ell,\quad
\E(3+2\ell, 3+2\ell)= -2g+\ell.$$
Thus, the elements of every column of $A$ sum to zero: we conclude that $A$ is a diagonal $\SMA(a;3)$.
\end{proof}

\begin{lem}\label{3diag_even}
There exists a diagonal $\SMA(a;3)$ for all $a\geq 6$ such that $a\equiv 2\pmod 4$. 
\end{lem}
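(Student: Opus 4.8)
The plan is to follow closely the template of Lemma \ref{3diag_odd}, producing an explicit cyclically $3$-diagonal array $A$ of size $a$ whose filled cells lie on $D_0$, $D_1$, $D_2$. Write $a=4g+2$ with $g\geq 1$, so that $nk=3a=12g+6$ is even and we must realize $\Omega=\{\pm 1,\pm 2,\ldots,\pm(6g+3)\}$ using exactly the $3a=12g+6$ available cells. As in the odd case, I would prescribe the three entries $\E(i,i)$, $\E(i,i+1)$, $\E(i,i+2)$ of each row $R_i$ by piecewise-linear formulas in $i$, the pieces being indexed by the residue of $i$ modulo a small constant, and chosen so that $\E(i,i)+\E(i,i+1)+\E(i,i+2)=0$ holds identically; this makes every row sum vanish by construction.

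The second step is to control the support. I would compute $\E(D_0)$, $\E(D_1)$ and $\E(D_2)$ as unions of a bounded number of integer intervals together with arithmetic progressions of common difference $2$, exactly as the intervals $[-(3g+1),-(g+1)]$, $[-g,g]$, $[g+1,3g+1]$ arise in the proof of Lemma \ref{3diag_odd}. The aim is to arrange the formulas so that the lists of absolute values of these three diagonals partition $[1,6g+3]$ with no repetition and no gap; this forces $\supp(A)=[1,6g+3]$ and shows that each element of $\Omega$ occurs exactly once. The third step is the column sums: since the filling is governed by a fixed local rule applied cyclically, the sum along column $j$ is $\E(j,j)+\E(j-1,j)+\E(j-2,j)$, with indices read modulo $a$, and for the generic rows this vanishes by telescoping from the row relation, precisely as in Lemma \ref{3diag_odd}. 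Combined with Theorem \ref{magic sq->rt}, such an array is exactly what is needed.

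The main obstacle is the seam created by the parity of $a$. Because $a\equiv 2\pmod 4$, the alternating pattern that closes up perfectly for odd $a$ cannot close consistently around the cyclic boundary while simultaneously avoiding the value $0$, which (unlike the odd case, where $nk$ is odd and $0\in\Omega$) does not belong to $\Omega$ when $nk$ is even. Consequently a constant number of rows near the seam must be defined by ad hoc formulas that repair both the sign balance and the interval-tiling of the support. The delicate point, and where the genuine work lies, is to verify that these few corrected rows keep the affected row sums and the affected column sums equal to zero while still contributing exactly the absolute values missing from the generic part, so that the overall support remains $[1,6g+3]$ with each value attained once.
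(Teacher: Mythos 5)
Your proposal correctly identifies the strategy the paper actually uses --- an explicit cyclically $3$-diagonal filling of $D_0,D_1,D_2$ by piecewise-linear formulas in the row index, with the diagonals' entry sets computed as unions of intervals and arithmetic progressions of difference $2$, and with a constant number of exceptional rows near the cyclic seam --- and you correctly diagnose why the odd-order construction cannot simply be transplanted (for $a\equiv 2\pmod 4$ one has $nk=3a$ even, so $0\notin\Omega$ and the middle diagonal can no longer be the symmetric interval $[-g,g]$). But what you have written is a plan, not a proof. The entire mathematical content of the lemma \emph{is} the explicit construction: one must actually exhibit formulas for $\E(i,i)$, $\E(i,i+1)$, $\E(i,i+2)$, specify the exceptional rows, and check that (i) every row triple sums to zero, (ii) every column triple sums to zero, and (iii) the absolute values tile $[1,6g+3]$ exactly once. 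You explicitly defer all three verifications (``the aim is to arrange the formulas so that\dots'', ``the delicate point, and where the genuine work lies, is to verify\dots''), so nothing is proved. For comparison, the paper's construction requires two distinct generic regimes (rows $R_{2+2\ell},R_{3+2\ell}$ for small indices versus rows $R_{2g+2+2\ell},R_{2g+3+2\ell}$ for large ones, with $D_1$ realized as $[-(2g+1),-1]\cup[1,2g+1]$ split between the two halves) plus four exceptional rows $R_1,R_{2g+1},R_{a-1},R_a$; none of this structure is visible in, or derivable from, your outline.

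A secondary inaccuracy: the column sums do not ``vanish by telescoping from the row relation.'' The identity $\E(i,i)+\E(i,i+1)+\E(i,i+2)=0$ for all $i$ does not imply $\E(j,j)+\E(j-1,j)+\E(j-2,j)=0$; these are independent linear constraints, and in both Lemma \ref{3diag_odd} and the even case the paper verifies the column sums by a separate explicit computation. Treating the column condition as automatic would let you accept many fillings that satisfy the row condition but fail to be signed magic arrays, so this step cannot be waved through even once the formulas are written down.
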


\begin{proof}
A diagonal $\SMA(a;3)$, when $a=4g+2$, can be obtained by taking a pf array $A$
whose rows $R_1,R_2,\ldots,R_a$ are as follows.
We fill the row $R_1$ with the entries
$$ \E(1,1) =-(3g+2), \quad \E(1,2)=1,\quad \E(1,3)= 3g+1.$$
For each $\ell \in [0,g-1]$, we fill the row $R_{2+2\ell}$ with the entries
$$\E(2+2\ell, 2+2\ell)= -(5g+4) -\ell, \quad 
\E(2+2\ell, 3+2\ell)=  3+2\ell, \quad \E(2+2\ell, 4+2\ell)= (5g+1)-\ell,$$
and, for each $\ell \in [0,g-2]$, we fill the row $R_{3+2\ell}$ with the entries
$$\E(3+2\ell, 3+2\ell)= -(3g+4)-\ell, \quad 
\E(3+2\ell, 4+2\ell)= 4+2\ell, \quad \E(3+2\ell, 5+2\ell )= 3g-\ell.$$
Next, we fill the row $R_{2g+1}$ with the entries
$$\E(2g+1, 2g+1)=-(4g+3),\quad \E(2g+1,2g+2)=2, \quad \E(2g+1,2g+3)=4g+1.$$
For each $\ell \in [0,g-1]$, we fill the row $R_{2g+2+2\ell}$ with the entries
$$\E(2g+2+2\ell, 2g+2+2\ell)=-(4g+4) - \ell, $$ 
$$\E(2g+2+2\ell, 2g+3+2\ell)=- (2g-1) +2\ell, \quad \E(2g+2+2\ell, 2g+4+2\ell)= (6g+3) -\ell,$$
and, for each $\ell \in [0,g-2]$, we fill the row $R_{2g+3+2\ell}$ with the entries
$$\E(2g+3+2\ell,2g+3+2\ell)= -(2g+2) - \ell, $$
$$\E(2g+3+2\ell, 2g+4+2\ell)= -(2g-2) +2\ell,\quad \E(2g+3+2\ell, 2g+5+2\ell)=  4g -\ell.$$
Finally, we fill the rows $R_{a-1},R_a$ with the entries
$$\E(a-1,a-1)=-(3g+1),\quad \E(a-1,a)=-(2g+1), \quad \E(a-1,1)=5g+2,$$
$$\E(a,a)= -(3g+3),\quad \E(a,1)=-2g, \quad \E(a,2)=5g+3.$$
It is clear that we filled the diagonals $D_{0},D_{1},D_{2}$ in such a way that the elements of every row of $A$ sum to zero.
Furthermore, $\E(D_{0})=[-(6g+3),-(2g+2)] $,  $\E(D_1)=[-(2g+1),-1]\cup [1,2g+1]$ and  $\E(D_2)=[2g+2,6g+3]$.
We now describe the columns $C_1,C_2,\ldots,C_a$  of $A$. 
We filled the columns $C_1,C_2$ with the entries
$$\E(a-1,1)=5g+2,\quad \E(a,1)=-2g,\quad \E(1,1)=-(3g+2),$$
$$\E(a,2)=5g+3,\quad \E(1,2)=1, \quad  \E(2,2)=-(5g+4).$$
For each $\ell \in [0,g-1]$, we filled the column $C_{3+2\ell}$ with the entries
$$\E(1+2\ell, 3+2\ell)=(3g+1)-\ell,$$
$$\E(2+2\ell, 3+2\ell)=3+2\ell, \quad \E(3+2\ell, 3+2\ell)=-(3g+4)-\ell,$$
and, for each $\ell\in [0,g-2]$, we filled the column $C_{4+2\ell}$ with the entries
$$\E(2+2\ell, 4+2\ell)=(5g+1)-\ell,$$
$$\E(3+2\ell, 4+2\ell)=4+2\ell, \quad \E(4+2\ell, 4+2\ell)=-(5g+5)-\ell.$$
Next, we filled the column $C_{2g+2}$ with the entries
$$\E(2g,2g+2)=4g+2, \quad \E(2g+1,2g+2)=2,\quad \E(2g+2,2g+2)=-(4g+4).$$
Then, for each $\ell \in [0,g-1]$, we filled the column $C_{2g+3+2\ell}$ with the entries
$$\E(2g+1+2\ell, 2g+3+2\ell)=(4g+1)-\ell$$
$$\E(2g+2+2\ell, 2g+3+2\ell)=-(2g-1)+2\ell, \quad
\E(2g+3+2\ell, 2g+3+2\ell)=-(2g+2)-\ell,$$
and, for each $\ell\in [0,g-2]$, we filled the column $C_{2g+4+2\ell}$ with the entries
$$\E(2g+2+2\ell, 2g+4+2\ell)=(6g+3)-\ell,$$
$$\E(2g+3+2\ell, 2g+4+2\ell)=-(2g-2)+2\ell, \quad
\E(2g+4+2\ell, 2g+4+2\ell)=-(4g+5 )-\ell.$$
Finally, we filled the column $C_{a}$ with the entries
$$\E(a-2,a)=5g+4,\quad \E(a-1,a)=-(2g+1),\quad \E(a,a)=-(3g+3).$$
Thus, the elements of every column of $A$ sum to zero: we conclude that $A$ is a diagonal $\SMA(a;3)$.
 \end{proof}

In Figure \ref{fig5} the reader can find a diagonal $\SMA(14;3)$, obtained by following the proof of Lemma \ref{3diag_even}.

\begin{figure}[ht]
\begin{footnotesize}
$$\begin{array}{|c|c|c|c|c|c|c|c|c|c|c|c|c|c|}\hline
     -11 &    1 &   10 &    &   &   &   &   &   &   &   &   &   &   \\\hline
      & -19 &    3 &   16 &    &   &   &   &   &   &   &   &   &   \\\hline
      &   & -13 &    4 &    9 &    &   &   &   &   &   &   &   &    \\\hline
      &   &   & -20 &    5 &   15 &    &   &   &   &   &   &   &    \\\hline
     &   &   &   & -14 &    6 &    8 &    &   &   &   &   &   &   \\\hline
      &   &   &   &   & -21 &    7 &   14 &    &   &   &   &   &   \\\hline
      &   &   &   &   &   & -15 &    2 &   13 &    &   &   &   &   \\\hline
      &   &   &   &   &   &   & -16 &   -5 &   21 &    &   &   &    \\\hline
      &   &   &   &   &   &   &   &  -8 &   -4 &   12 &    &   &    \\\hline
      &   &   &   &   &   &   &   &   & -17 &   -3 &   20 &    &    \\\hline
      &   &   &   &   &   &   &   &   &   &  -9 &   -2 &   11 &     \\\hline
      &   &   &   &   &   &   &   &   &   &   & -18 &   -1 &   19  \\\hline
     17 &    &   &   &   &   &   &   &   &   &   &   & -10 &   -7  \\\hline
     -6 &   18 &    &   &   &   &   &   &   &   &   &   &   & -12  \\\hline
  \end{array}$$
\end{footnotesize}
\caption{A diagonal $\SMA(14;3)$.}\label{fig5}
\end{figure}

\begin{lem}\label{5diag_odd}
There exists a diagonal $\SMA(a;5)$ for all odd $a\geq 5$. 
\end{lem}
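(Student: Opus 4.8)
The plan is to mimic the explicit construction of Lemma~\ref{3diag_odd}, writing $a=2g+1$ and filling the five consecutive diagonals $D_0,D_1,D_2,D_3,D_4$ directly. Since $a$ is odd, $nk=5a$ is odd, so the entries must realize exactly the set $\Omega=\{0,\pm1,\ldots,\pm(5g+2)\}$, and the band of five diagonals has precisely $5a=|\Omega|$ cells. First I would fix a decomposition of $\Omega$ into five blocks of $2g+1$ consecutive integers, assigned to the diagonals so that the band is symmetric under negation: the central diagonal $D_2$ receives $[-g,g]$, the diagonals $D_1,D_3$ receive the \emph{near} blocks $[-(3g+1),-(g+1)]$ and $[g+1,3g+1]$, while $D_0,D_4$ receive the \emph{far} blocks $[-(5g+2),-(3g+2)]$ and $[3g+2,5g+2]$.

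Next I would prescribe the entry $\E(i,j)$ of each filled cell by formulas affine in a parameter $\ell$, splitting the rows according to the parity of their index exactly as in Lemma~\ref{3diag_odd}: on each diagonal the two parity classes contribute two sub-blocks whose union is the assigned block. The constant terms are chosen so that the five blocks are covered bijectively, and the $\ell$-coefficients are chosen so that in every row both the constant parts and the $\ell$-parts cancel; this makes every row sum vanish identically and is the routine part of the argument. Checking that $\supp(A)=\Omega$ is then immediate from the block decomposition.

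The delicate point is the column sums. Reading a fixed column down the band collects one entry from each diagonal, but taken from five consecutive rows of alternating parity, so the cancellation that held along rows is not inherited for free. Modelling each diagonal $D_r$ as globally affine in the row index with slope $B_r$, the row condition forces $\sum_r B_r=0$, whereas the column condition forces the additional relation $\sum_r rB_r=0$. A naive assignment (for instance making the $D_0$ and $D_4$ entries of each row negatives of one another) satisfies the row condition but violates the second, producing a constant nonzero column excess. I would therefore choose the slope profile to be symmetric about the central diagonal, i.e. $B_r=B_{4-r}$, since then $\sum_r rB_r=\sum_r (4-r)B_r=4\sum_r B_r-\sum_r rB_r$ forces $\sum_r rB_r=0$ automatically; the parity-correction constants must be arranged with the same symmetry so that they too cancel down each column.

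The main obstacle is thus to realize such a symmetric slope profile while keeping every diagonal equal to a block of $2g+1$ consecutive integers, and, above all, to verify the column sums for the few exceptional columns in which the five contributing rows wrap around modulo $a$ (the analogue of the special rows $R_{a-1},R_a$ treated separately in Lemma~\ref{3diag_even}). Once the generic columns are disposed of by the symmetry and these boundary columns are checked directly, the array $A$ is a diagonal $\SMA(a;5)$, which completes the proof.
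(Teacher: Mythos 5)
Your plan is the paper's plan: the same partition of $\Omega=\{0,\pm1,\ldots,\pm(5g+2)\}$ into the five blocks $[-(5g+2),-(3g+2)]$, $[-(3g+1),-(g+1)]$, $[-g,g]$, $[g+1,3g+1]$, $[3g+2,5g+2]$ assigned to $D_0,\ldots,D_4$, the same splitting of the rows into two parity classes parametrized by $\ell$ together with exceptional rows (the paper uses $R_1$ and $R_a$), and your symmetric-slope heuristic is exactly what the paper's formulas realize: the $\ell$-coefficients along $(D_0,D_1,D_2,D_3,D_4)$ are $(-1,2,-2,2,-1)$, which indeed satisfy both $\sum_r B_r=0$ and $\sum_r rB_r=0$. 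Your observation that a profile symmetric about $D_2$ forces $\sum_r rB_r=0$ automatically is correct and is a nice structural explanation, not made explicit in the paper, of why the column sums can be arranged.

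Nevertheless, as a proof this is incomplete. The lemma is an existence statement whose only available proof is the construction itself, and you never exhibit it: you do not write down the constant terms for the two parity classes or the entries of the exceptional rows, you do not verify that the resulting sub-blocks tile each prescribed diagonal block exactly once (which is what gives $\supp(A)=[0,5g+2]$ with no repeats up to sign), and you do not carry out the column check --- which is genuinely not a formal consequence of the affine model, because a column collects its five entries from rows of alternating parity and from the exceptional rows near the wrap-around, so both column parities and the boundary columns must be verified against the actual constants. You correctly identify all of this as ``the main obstacle'' and ``the delicate point,'' but identifying the obstacle is not overcoming it; the entire content of the lemma lives there. The paper closes the gap by listing every entry explicitly (rows $R_1$, $R_{2+2\ell}$, $R_{3+2\ell}$, $R_a$) and then re-reading the array column by column ($C_{3+2\ell}$ and $C_{4+2\ell}$, with $C_1,C_2$ obtained as the wrap-around values of $\ell$) to confirm each column sums to zero.
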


\begin{proof}
A diagonal $\SMA(a;5)$, when  $a=2g+1$, can be obtained by taking a pf array $A$
whose rows $R_1,R_2,\ldots,R_a$ are as follows.
We fill the row $R_1$ with the entries
$$\E(1,1)=-(5g+2),\quad \E(1,2)= -(g+1),\quad \E(1,3)=g,\quad \E(1,4)= g+2,\quad \E(1,5)=4g+1.$$
For each $\ell \in [0,g-1]$, we fill the row $R_{2+2\ell}$ with the entries
 $$\E(2+2\ell, 2+2\ell)= -(4g+2)-\ell, \quad 
\E(2+2\ell, 3+2\ell)= -(3g+1)+2\ell, $$
$$\quad \E(2+2\ell,  4+2\ell)= (g-1)-2\ell,\quad \E(2+2\ell, 5+2\ell)= (g+3)+2\ell, \quad \E(2+2\ell, 6+2\ell)= (5g+1)-\ell,$$
and, for each $\ell \in [0,g-2]$, we fill the row $R_{3+2\ell}$ with the entries
$$\E(3+2\ell, 3+2\ell) = -(3g+2)-\ell, \quad
\E(3+2\ell, 4+2\ell)=  -3g +2\ell,$$
$$\E(3+2\ell, 5+2\ell)= (g-2)-2\ell,\quad \E(3+2\ell, 6+2\ell)= (g+4)+2\ell, \quad \E(3+2\ell, 7+2\ell)= 4g -\ell.$$
Finally, we fill the row $R_a$ with the entries
$$\E(a,a)= -(4g+1),\quad \E(a,1)= -(g+2),\quad \E(a,2)= -g, $$
$$\E(a,3)=g+1, \quad \E(a,4)= 5g+2.$$
It is clear that we filled the diagonals $D_{0},D_{1},D_{2},D_3,D_4$ in such a way that the elements of every row of $A$ sum to zero.
Furthermore, $\E(D_0)=[-(5g+2), -(3g+2)]$, $\E(D_1)=[-(3g+1), -(g+1)]$,  $\E(D_2)=[-g,g]$, $\E(D_3)=[g+1,3g+1]$
and $\E(D_4)=[3g+2, 5g+2]$.
We now describe the columns $C_1,C_2,\ldots,C_a$  of $A$. 
For each $\ell \in [0,g]$, we filled the column $C_{3+2\ell}$ with the entries
$$\E(-1+2\ell, 3+2\ell)= (4g+2)-\ell, \quad \E(2\ell, 3+2\ell) = (g+1) + 2\ell,$$
$$\E(1+2\ell, 3+2\ell)= g -2\ell, \quad \E(2+2\ell, 3+2\ell)= -(3g+1)+2\ell,$$
$$\E( 3+2\ell,  3+2\ell)= -(3g+2) -\ell.$$
Note that the column $C_2$ corresponds to the value $\ell=g$. Also,  for each $\ell\in [0,g-1]$, we filled the column $C_{4+2\ell}$ with the entries
$$\E(2\ell, 4+2\ell)= (5g+2)-\ell, \quad \E( 1+2\ell, 4+2\ell)= (g+2) +2\ell,$$
$$\E(2+2\ell,  4+2\ell)= (g-1)-2\ell, \quad \E( 3+2\ell, 4+2\ell)= -3 g +2\ell, \quad
\E( 4+2\ell,  4+2\ell )= -(4g+3)-\ell.$$
Note that the column $C_1$ corresponds to the value $\ell=g-1$. 
Thus, the elements of every column of $A$ sum to zero: we conclude that $A$ is a diagonal $\SMA(a;5)$.
\end{proof}

\begin{lem}\label{6diag}
There exists a shiftable diagonal $\SMA(a;6)$ for all $a\geq 6$. 
\end{lem}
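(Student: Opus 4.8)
The plan is to give an explicit construction, in the same style as Lemmas~\ref{3diag_odd}--\ref{5diag_odd}: fill the six consecutive diagonals $D_0,D_1,\ldots,D_5$ of an empty $a\times a$ array $A$ with the values $\{\pm 1,\pm 2,\ldots,\pm 3a\}$, each used once, so that every row and every column sums to zero. The decisive new feature, coming from $k=6$ being even, is that shiftability can be secured \emph{a priori} by letting the sign of an entry depend only on the diagonal it occupies. If we declare $D_0,D_1,D_2$ to be the positive diagonals and $D_3,D_4,D_5$ the negative ones, then, since each row and each column meets every diagonal in exactly one cell, each row and each column automatically contains three positive and three negative entries; hence $A$ is shiftable whatever magnitudes we place. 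It therefore remains only to choose the absolute values.

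A particularly clean way to organise the magnitudes is to place each value and its negative symmetrically three columns apart: for $j\in\{0,1,2\}$ put $+u_{i,j}$ in the cell $(i,i+j)\in D_j$ and $-u_{i,j}$ in the cell $(i,i+j+3)\in D_{j+3}$, where $(u_{i,j})$ is a bijection onto $[1,3a]$. With this layout every row sum vanishes identically, and a direct computation shows that the sum of column~$c$ equals $T_c-T_{c-3}$, where $T_c=u_{c,0}+u_{c-1,1}+u_{c-2,2}$. Thus the whole column condition collapses to requiring that $T_c$ be periodic of period $3$ in $c$ (equivalently, constant when $3\nmid a$). First I would realise such a bijection by explicit arithmetic formulas in a running index, exactly as the three- and five-diagonal constructions tile $[1,3a]$ through interleaved arithmetic progressions; when $a$ is odd the forced average value $\tfrac{3(3a+1)}{2}$ of the $T_c$ is an integer and this goes through smoothly.

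Next I would treat the remaining residues of $a$. The symmetric layout above fails precisely when $a$ is even, since then $\tfrac{3(3a+1)}{2}\notin\Z$ and no magnitude-paired arrangement can satisfy the column condition; for these $a$ I would abandon the $\pm$-pairing inside rows and instead prescribe, diagonal by diagonal, independent arithmetic progressions on the positive and on the negative diagonals, chosen so that the three positive entries of each row balance the three negative ones and the six diagonals tile $[1,3a]$. As in the proof of Lemma~\ref{3diag_even}, a short separate description of the first and last few rows and columns is needed to absorb the wrap-around modulo $a$. In every case the verification of the column sums is then carried out by listing, for each column, the six entries it collects (one from each diagonal, hence from six distinct rows) and checking that they cancel.

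The hard part will be the even case together with the boundary bookkeeping: one must exhibit a single family of formulas meeting all three requirements at once---row sums zero, column sums zero, and exact coverage of $[1,3a]$---while keeping the handful of exceptional wrap-around cells under control, and this may force a split according to $a\bmod 4$. The sign-per-diagonal layout is what keeps shiftability automatic throughout and, via the reduction of the column condition to the periodicity of the partial sums $T_c$, isolates the genuine difficulty in a single, transparent condition.
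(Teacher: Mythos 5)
Your framing is sound and in fact coincides with the paper's central device: the proof in the paper also assigns a fixed sign to each of the six diagonals (there $D_0,D_1,D_4$ are positive and $D_2,D_3,D_5$ negative), so that every row and column automatically picks up three positive and three negative entries and shiftability comes for free; the magnitudes are then linear functions of the row index along each diagonal, with the last three rows written out separately to absorb the wrap-around. Your analysis of the $\pm$-paired layout (row sums vanishing identically, column condition collapsing to $T_c=T_{c-3}$, and the parity obstruction $\tfrac{3(3a+1)}{2}\notin\Z$ for even $a$) is also correct as far as it goes.

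The problem is that the proposal stops at the point where the proof actually has to be done. For odd $a$ you reduce the lemma to the existence of a bijection $(u_{i,j})$ onto $[1,3a]$ with $T_c$ constant, but you do not exhibit one (this amounts to producing two permutations $g,h$ of $\Z_a$ with $g(c)+h(c)\equiv K-c$, which does exist for odd $a$ but must be written down and checked against the wrap-around); and for even $a$ --- which you yourself identify as ``the hard part'' --- you offer only the intention to prescribe independent arithmetic progressions on the six diagonals, with no formulas, no row-balance verification, and no treatment of the boundary rows. Since the lemma is an existence statement whose proof in this setting \emph{is} the explicit array, leaving both constructions unspecified is a genuine gap, not a routine verification. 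It is also worth noting that the parity split you anticipate is avoidable: the paper's single choice of signed diagonals and magnitudes (e.g.\ $\E(\ell,\ell)=\ell$, $\E(\ell,\ell+1)=a+\ell$, $\E(\ell,\ell+2)=-a-2\ell$, $\E(\ell,\ell+3)=-3-\ell$, $\E(\ell,\ell+4)=3a-2-\ell$, $\E(\ell,\ell+5)=-(3a-5)+2\ell$ for $\ell\in[1,a-3]$) works uniformly for all $a\geq 6$, precisely because it does not insist on placing $+x$ and $-x$ in the same row three columns apart.
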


\begin{proof}
For any $a\geq 6$, a shiftable diagonal $\SMA(a;6)$ can be obtained by taking a pf array $A$
whose rows $R_1,R_2,\ldots,R_a$ are as follows.
For each $\ell \in [1,a-3]$, we fill the row $R_{\ell}$ with the entries
 $$\E(\ell,\ell)= \ell, \quad \E(\ell,\ell+1)= a+\ell, \quad \E(\ell, \ell+2)=-a-2\ell,$$
$$\E(\ell,\ell+3)= -3-\ell,\quad \E(\ell, \ell+4)= (3a-2)-\ell, \quad \E(\ell, \ell+5)= -(3a-5)+2\ell.$$
We fill the rows $R_{a-2},R_{a-1}$ as follows:
$$\E(a-2,a-2)=a-2,\quad \E(a-2,a-1)=2a-2,\quad \E(a-2,a)=-(3a-4),$$
$$\E(a-2,1)=-1, \quad \E(a-2,2)=3a, \quad \E(a-2,3)=-(3a-1);$$
$$\E(a-1,a-1)=a-1, \quad \E(a-1,a)=2a-1,\quad \E(a-1,1)=-(3a-2),$$
$$\E(a-1,2)=-2,\quad \E(a-1,3)=3a-1,\quad  \E(a-1,4)=-(3a-3).$$
Finally, we fill the row $R_a$ with the entries
$$\E(a,a)=a,\quad \E(a,1)=2a,\quad  \E(a,2)=-3a,$$
$$\E(a,3)=-3,\quad \E(a,4)=3a-2,\quad \E(a,5)=-(3a-5).$$

It is clear that we filled the diagonals $D_{0},D_{1},\ldots,D_5$ in such a way that the elements of every row of $A$ sum to zero.
Note that each row contains three positive entries and three negative entries.
Also,  $\E(D_0)=[1,a]$, $\E(D_1)=[a+1,2a]$,  $\E(D_2)=\{-3a,-(3a-2),\ldots,-(a+2)\}$, $\E(D_3)=[-a,-1]$,
$\E(D_4)=[2a+1,3a]$ and  $\E(D_5)=\{-(3a-1), -(3a-3),\ldots,-(a+1)\}$.
We now describe the columns $C_1,C_2,\ldots,C_a$  of $A$. 
We filled the columns $C_1,C_2$ with the entries
$$\E(a-4,1)=-(a+3),\quad \E(a-3,1)=2a+1,\quad \E(a-2,1)=-1,$$
$$\E(a-1,1)=-(3a-2),\quad \E(a,1)=2a,\quad \E(1,1)=1;$$
$$\E(a-3,2)=-(a+1),\quad \E(a-2,2)=3a,\quad \E(a-1,2)=-2,$$
$$\E(a,2)=-3a,\quad \E(1,2)=a+1,\quad \E(2,2)=2.$$
For each $\ell \in [3,a]$, we filled the column $C_{\ell}$ with the entries
$$\E(\ell-5,\ell)=-(3a+5)+2\ell,\quad \E(\ell-4,\ell)= (3a+2)-\ell,\quad \E(\ell-3,\ell)=-\ell,$$
$$\E(\ell-2,\ell)=-(a-4)-2\ell, \quad \E(\ell-1,\ell)=(a-1)+\ell, \quad \E(\ell,\ell)=\ell.$$
Thus, the elements of every column of $A$ sum to zero, and every column contains three positive entries and three negative entries.
We conclude that $A$ is a shiftable diagonal $\SMA(a;6)$.
\end{proof}

Note that, because of Lemmas \ref{3diag_odd}, \ref{3diag_even}, \ref{5diag_odd} and \ref{6diag},
many of the constructions given in \cite{magicDM} can be shortened and simplified.

\begin{cor}\label{magic_diag}
There exists a diagonal $\SMA(a;b)$ for any $a\geq b\geq 3$.
Also, if $b$ is even, there exists a shiftable diagonal $\SMA(a;b)$ for any $a\geq b\geq 3$.
\end{cor}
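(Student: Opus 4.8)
Corollary \ref{magic_diag} asserts the existence of a diagonal $\SMA(a;b)$ for all $a\geq b\geq 3$, together with a shiftable variant when $b$ is even.

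The plan is to bootstrap from the small-$b$ constructions already established (Lemmas \ref{3diag_odd}–\ref{6diag}) to all larger $b$ by a \emph{stacking} argument, analogous to the $V_h,W_h$ blocks in the proof of Theorem \ref{main_odd}. The key observation is that if one already has a shiftable diagonal $\SMA(a;b')$ occupying diagonals $D_0,\ldots,D_{b'-1}$, then adding a shift $\pm x$ (for a suitable nonnegative integer $x$) produces another shiftable diagonal block on the same diagonals whose support is a translate of the original. By vertically concatenating such shifted copies with enough offset, one can increase the number of filled diagonals while keeping the row and column sums zero, and while making the supports tile an initial interval $[1,\text{(total)}]$. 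Since any even $b\geq 8$ can be written as $b=6+2r$ or built from copies of a shiftable base block, the even case reduces to Lemmas \ref{6diag} (the base $\SMA(a;6)$) together with a shiftable $2$-diagonal gadget used as the repeating unit.

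Concretely, for even $b$ I would first produce a shiftable diagonal two-diagonal strip---a block filling $D_i,D_{i+1}$ with equally many positive and negative entries per row and column, whose entries are consecutive integers (up to sign). Stacking one copy of the base $\SMA(a;6)$ from Lemma \ref{6diag} with $\frac{b-6}{2}$ appropriately shifted copies of this strip yields a shiftable diagonal $\SMA(a;b)$ whose support is exactly $[1,\lfloor ab/2\rfloor]$ (or $\{\pm1,\ldots,\pm ab/2\}$ in the signed count), establishing the shiftable claim for all even $b\geq 6$; the cases $b=4$ can be handled directly (e.g.\ from the shiftable block $Q_4$ already exhibited) or by the same stacking on top of a $b=4$ base. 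For odd $b$, I would similarly take a base construction (Lemma \ref{3diag_odd} gives $\SMA(a;3)$ for all odd $a$, and Lemma \ref{5diag_odd} gives $\SMA(a;5)$) and stack shiftable even strips on top of it: writing $b=3+2r$ and adjoining $r$ shifted shiftable two-diagonal strips to a diagonal $\SMA(a;3)$ produces a diagonal $\SMA(a;b)$. The parity of $b$ is preserved because the base contributes an odd number of diagonals and each strip contributes two.

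The main obstacle will be the bookkeeping at the \emph{support} level: one must verify that the shifted copies' supports fit together to form a single initial interval with no gaps and no repeats, so that condition (b) of Definition \ref{def:Magic} holds exactly. This requires choosing the shift amounts $x$ so that each new strip's support begins precisely where the previous block's support ended---exactly the kind of interval-tiling computation carried out for $\mathcal{A}_1(\mu)$ and $\mathcal{A}_2(\nu)$ in Section \ref{s0}. A secondary subtlety is the count of the central value $\pm\frac{v}{2}$ when $ab$ is even: in the non-shiftable odd-base case one must check that the middle value's multiplicity is handled by the base block (whose diagonal $D_1$ already contains the balanced interval $[-g,g]$ in Lemmas \ref{3diag_odd} and \ref{5diag_odd}), while the stacked strips only contribute larger-magnitude entries. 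Once the shift offsets are pinned down, the row-sum, column-sum, and shiftability properties are immediate from the corresponding properties of the base and the strips, so the argument reduces entirely to this interval-packing verification.
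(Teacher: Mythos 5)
There is a genuine gap: the repeating unit you propose does not exist. A ``shiftable two-diagonal strip'' would be a cyclically $2$-diagonal block with two filled cells per row and per column, all row and column sums zero, and pairwise distinct absolute values. But if the strip occupies $D_i,D_{i+1}$, the zero row sums force row $j$ to contain $x_j$ and $-x_j$, and then the column through cells $(c-i,c)$ and $(c-i-1,c)$ sums to $x_{c-i}-x_{c-i-1}$, so zero column sums force all $x_j$ to coincide --- contradicting condition (b) of Definition \ref{def:Magic}. Hence your induction step (adding two diagonals at a time) cannot be carried out with an individually balanced gadget; the smallest strip that can simultaneously have zero row sums, zero column sums and distinct entries has \emph{four} diagonals. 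This is exactly what the paper uses: it writes $b=4h+r$ with $r\in\{3,5,6\}$, takes the base diagonal $\SMA(a;r)$ from Lemmas \ref{3diag_odd}, \ref{5diag_odd} and \ref{6diag}, and fills the diagonals $D_r,\ldots,D_{r+4h-1}$ with $h$ shifted copies $B\pm\frac{ar-1}{2},\ldots,B\pm\frac{a(r+4h-4)-1}{2}$ of a shiftable diagonal $\SMA(a;4)$ taken from \cite[Lemma 10]{magicDM}. Your parity bookkeeping would survive this repair (a base with $r$ diagonals plus $4$-diagonal strips still realizes every $b\equiv r\pmod 4$), but as written the construction collapses at the first strip.

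A secondary point: your plan is presented as self-contained, but the paper's own lemmas only cover certain parities of $a$ (e.g.\ Lemma \ref{3diag_odd} and Lemma \ref{5diag_odd} require $a$ odd, Lemma \ref{3diag_even} requires $a\equiv 2\pmod 4$), and the proof of the corollary explicitly outsources the remaining configurations --- $b$ even with $a$ odd, $b\geq 5$ odd with $a$ even, $b=3$ with $a\equiv 0\pmod 4$, and $b\equiv 0\pmod 4$ with $a$ even --- to \cite[Theorems 7, 9 and Lemmas 10, 11]{magicDM}. After citing those results, only the cases ($a,b$ both odd) and ($a$ even, $b\equiv 2\pmod 4$) need the stacking argument. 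You would either need to invoke the same external results or supply new base constructions for the missing parities of $a$; the interval-tiling and the handling of $0$ in the odd case are, as you say, routine once the correct four-diagonal gadget is in place.
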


\begin{proof}
The statement follows from \cite[Theorems 7 and 9]{magicDM} when 
(i) $b$ is even and $a$ is odd; or (ii) $b\geq 5$ is odd and $a$ is even; or (iii) $b=3$ and $a\equiv 0 \pmod 4$.
The case $b=3$ and $a\equiv 2 \pmod 4$ follows from Lemma \ref{3diag_even}, while
the case $b\equiv 0\pmod 4$ and $a$ even follows from \cite[Lemmas 10 and 11]{magicDM}.

We are left with the following two cases: (1) $a,b$ are both odd; (2) $a$ is even and $b\equiv 2 \pmod 4$.
So, write $b=4h+r$ with $r\in\{3,5,6\}$. 
Let $A$ be the  diagonal $\SMA(a;r)$ constructed in Lemmas \ref{3diag_odd}, \ref{5diag_odd} and \ref{6diag},
and let $B$ be the shiftable diagonal $\SMA(a;4)$ of \cite[Lemma 10]{magicDM}.
If $h=0$, set $C=A$. If $h>0$, let $C$ be the pf array obtained by 
filling the $4h$ diagonals $D_{r},D_{r+1},\ldots,D_{r+4h-1}$ of $A$ with the elements of the filled diagonals of 
$B\pm \frac{ar-1}{2}, B\pm \frac{a(r+4)-1}{2},\ldots, B\pm \frac{a(r+4h-4)-1}{2}$.
Then, $C$ is a diagonal $\SMA(a;b)$.
\end{proof}

\begin{prop}\label{shiftSMA}
Let  $m,n,s,k$ be four integers such that $3\leq s\leq n$, $3\leq k \leq m$ and $ms=nk$.
If $s$ and $k$ are both even, there  exists a shiftable $\SMA(m,n;s,k)$.
\end{prop}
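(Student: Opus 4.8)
The plan is to lean on the diagonal constructions already in hand and to isolate the small gcd as the genuinely hard case. Set $d=\gcd(s,k)$. Since $s$ and $k$ are both even, so is $d$; and because $s\equiv k\equiv 0\pmod 2$ with $s,k\geq 3$ we in fact have $s,k\geq 4$ and $d\geq 2$. I would split the argument according to whether $d\geq 4$ or $d=2$.

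If $d\geq 4$ the proof is immediate. Apply Corollary~\ref{magic_diag} with $b=d$: here $d$ is even and satisfies $3\leq d\leq \tfrac{nk}{d}$ (the last inequality because $n\geq s\geq d$ and $k\geq d$ give $nk\geq d^{2}$), so there is a shiftable diagonal $\SMA\!\left(\tfrac{nk}{d};d\right)$. Feeding this into the shiftable branch of Theorem~\ref{magic sq->rt} yields a shiftable $\SMA(m,n;s,k)$, as required.

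The case $d=2$ cannot be reached this way, and I would treat it directly. First note why the reduction fails: a shiftable diagonal $\SMA(a;2)$ would have each row consisting of a single pair $\{+v,-v\}$, forcing the $a$ rows to use the absolute values $1,\dots,a$ exactly once each; but each column would also be a zero-sum pair drawn from two \emph{distinct} rows, and $(\pm v)+(\pm v')=0$ with $v\neq v'$ is impossible. So no feeder square exists. Instead I would build the rectangle from shiftable blocks. Writing $s=2\bar s$, $k=2\bar k$ with $\gcd(\bar s,\bar k)=1$ and $m=\bar k c$, $n=\bar s c$ where $c=\gcd(m,n)$, the intended building block is a tight shiftable $T=\SMA(k,s;s,k)$: a fully filled $k\times s$ array, shiftable, with all row and column sums $0$ and support an interval. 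Such a $T$ can be assembled by juxtaposition: given a shiftable $\SMA(k,p;q,k)$ on $[1,N]$, placing it beside a copy shifted by $N$ produces a shiftable $\SMA(k,2p;2q,k)$ on $[1,2N]$, since shiftability keeps every row and column sum equal to $0$ while the two disjoint value–ranges merge into one interval. Iterating this on rows and on columns, starting from a few small explicit tight shiftable arrays, should supply $T$ for every even pair $(s,k)$. One then tiles the torus $\Z_{m}\times\Z_{n}$ with the translates $T,\,T\pm\tfrac{ks}{2},\,T\pm 2\cdot\tfrac{ks}{2},\dots$ placed along the broken diagonal $(a,b)\mapsto(a+ki,\,b+si)$, so that each row accumulates exactly $s$ and each column exactly $k$ entries while the supports sweep out $[1,\tfrac{nk}{2}]$.

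The main obstacle is precisely this $d=2$ tiling, and within it the parity of $c=\gcd(m,n)$. When $c$ is even the number of tight blocks is $\tfrac{ms}{ks}=\tfrac{c}{2}\in\Z$ and the broken-diagonal placement partitions the grid cleanly, so the verification is routine. When $c$ is odd the block $T$ no longer fits an integer number of times ($\tfrac{c}{2}\notin\Z$), and a single uniform tight block cannot tile; here I would instead combine blocks of two different shapes (or use a genuinely skew placement on the torus) and then re-examine the coverage count to confirm that every row still receives $s$ cells and every column $k$. A secondary but related difficulty is ensuring the base blocks feeding the juxtaposition step actually exist for all the residues of $s$ and $k$ that the doubling does not reduce (for instance $s\equiv 2\pmod 4$). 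Controlling the coverage for odd $c$ — rather than the clean $d\geq 4$ reduction or the juxtaposition step itself — is where the real work of this proposition lies.
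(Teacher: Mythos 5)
Your $d\geq 4$ branch is sound: Corollary~\ref{magic_diag} supplies a shiftable diagonal $\SMA(nk/d;d)$ whenever $d$ is even and $nk/d\geq d\geq 4$, and Theorem~\ref{magic sq->rt} converts it into a shiftable $\SMA(m,n;s,k)$. You are also right that $d=2$ cannot be reached this way, since no $\SMA(a;2)$ exists at all (each row would be a pair $\{v,-v\}$, and a zero-sum column would then need two entries of equal absolute value taken from distinct rows). The genuine gap is that your treatment of $d=2$ is not a proof. The tiling by translates of a tight shiftable $k\times s$ block only closes when the number of blocks $m/k=c/2$ is an integer, i.e.\ when $c=\gcd(m,n)$ is even; for odd $c$ you offer only the intention to ``combine blocks of two different shapes'' without specifying them or verifying the coverage counts, and you flag this yourself as where the real work lies. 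So the proposition is left unproved precisely on the cases that need an argument, e.g.\ $m=9$, $n=6$, $s=4$, $k=6$, or $m=15$, $n=9$, $s=6$, $k=10$. (The existence of the tight shiftable feeder blocks is also only asserted, though that part is easy to repair by stacking row pairs $(R,-R)$.)

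For comparison, the paper closes the $d=2$ (indeed, the whole even) case quite differently. It first quotes \cite{MP2}, where the statement is already proved except when $s,k\equiv 2\pmod 4$ and $m,n$ are both odd. For that residual case it does not tile with congruent blocks at all: assuming without loss of generality $m\geq n$ (so $s\leq k$), it takes a shiftable diagonal square $A_1=\SMA(n;s)$ from Corollary~\ref{magic_diag} and, when $m>n$, a shiftable $A_2=\SMA(m-n,n;s,k-s)$ from \cite[Proposition 4.14]{MP2} (applicable because $m-n\geq 2$ is even and $k-s\equiv 0\pmod 4$ with $k-s\geq 4$), and stacks $A_1$ over $A_2\pm ns/2$. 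Each row then receives $s$ entries, each column $s+(k-s)=k$, and the supports concatenate into $\{\pm 1,\ldots,\pm ms/2\}$. This ``square plus leftover strip'' decomposition is exactly the idea your sketch is missing for odd $c$: rather than forcing a uniform tight block to fit $c/2$ times, peel off an $n\times n$ square carrying $s$ entries per row and column and fill the remaining $(m-n)\times n$ strip with an array having $k-s$ entries per column. If you want to remain self-contained rather than citing \cite{MP2}, you would still need some such asymmetric splitting to dispose of the odd-$c$ cases.
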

 
 \begin{proof}
This result was already proved in \cite{MP2}, except when $s,k\equiv 2 \pmod 4$ and $m,n$ are both odd.
In this case, without loss of generality, we may assume $m\geq n$ (and so $s\leq k$).
Let $A_1$ be a shiftable $\SMA(n;s)$ obtained in Corollary \ref{magic_diag}.
Clearly if $m=n$ we have nothing to prove. So, suppose $m>n$.
Since $m-n\geq 2$ is even and $k-s\equiv 0 \pmod 4$ with $k-s\geq 4$, by
\cite[Proposition 4.14]{MP2} there exists a shiftable $\SMA(m-n, n;s,k-s)$, say $A_2$.
Let $A$ be the pf array of size $m\times n$ obtained by taking
$$A=\begin{array}{|c|}\hline
   A_1\\\hline
   A_2\pm ns/2 \\\hline
   \end{array}.$$
Each row of  $A$ contains $s$ filled cells and each of its columns contains $s+(k-s)=k$ filled cells.
Also, note that $\E(A_1)=\{\pm 1, \pm 2, \ldots, \pm ns/2\}$ and
$\E(A_2\pm sn/2)=\{\pm (1+ns/2), \pm (2+ns/2), \ldots, \pm ms/2\}$.
Since  $\E(A)=\E(A_1)\cup \E(A_2)=\{\pm 1, \pm 2, \ldots, \pm ms/2\}$,  $A$ is a shiftable $\SMA(m,n;s,k)$.
 \end{proof}

\begin{proof}[Proof of Theorem \ref{magic}]
Set $d=\gcd(s,k)$. If $d=2$, then $s$ and $k$ are both even, and the result follows from Proposition \ref{shiftSMA}.
Assume $d\geq 3$. By Corollary \ref{magic_diag}, there exists a diagonal $\SMA\left(\frac{nk}{d};d\right)$:
the result follows from Theorem \ref{magic sq->rt}. 
\end{proof}

We now consider the existence of magic rectangles.

\begin{lem}\label{magic->rect}
Let  $m,n,s,k$ be four integers such that $2\leq s\leq n$, $2\leq k \leq m$ and $ms=nk$.
If there exists a shiftable $\SMA(m,n;s,k)$, then there exists an $\MR(m,n;s,k)$. 
\end{lem}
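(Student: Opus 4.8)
The plan is to convert a shiftable signed magic array into a magic rectangle by exploiting the defining feature of shiftability: in each row and each column the positive and negative entries are equally numerous, so adding a constant to all positive entries and subtracting it from all negative entries leaves every row sum and column sum unchanged (both equal to $0$). This is precisely the operation $A \pm x$ recorded in the Notation section. The goal is to relabel the entries of an $\SMA(m,n;s,k)$ so that they become exactly the set $\{0,1,\ldots,nk-1\}$ demanded of an $\MR(m,n;s,k)$, while keeping all line sums constant.

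First I would recall that in a shiftable $\SMA(m,n;s,k)$ the parameters $s$ and $k$ are forced to be even (half the entries in each line are positive), and the support is $\{1,2,\ldots,nk/2\}$, with each value occurring once with a $+$ sign and once with a $-$ sign across the array, so $nk$ is even and the entry set is $\{\pm 1,\pm 2,\ldots,\pm nk/2\}$. Writing $A$ for the given shiftable $\SMA(m,n;s,k)$, I would form $A' = A \pm \tfrac{nk}{2}$; since $A$ is shiftable, every row and column of $A'$ still sums to $0$. The effect on entries is that each positive value $x \in \{1,\ldots,nk/2\}$ becomes $x + \tfrac{nk}{2} \in \{\tfrac{nk}{2}+1,\ldots,nk\}$, and each negative value $-x$ becomes $-x - \tfrac{nk}{2} \in \{-nk,\ldots,-\tfrac{nk}{2}-1\}$. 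Thus $\E(A')$ consists of the $nk/2$ largest positive integers up to $nk$ together with their negatives, each absolute value appearing once positively and once negatively.

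Next I would add the \emph{same} constant $\tfrac{nk}{2}$ to \emph{every} entry of $A'$, not respecting sign. Because every row of $A'$ has $s$ filled cells and every column has $k$ filled cells, adding $\tfrac{nk}{2}$ to each entry increases each row sum by $s\cdot\tfrac{nk}{2}$ and each column sum by $k\cdot\tfrac{nk}{2}$; these are constants depending only on the line type, so all row sums become the common value $c_1 = s\cdot\tfrac{nk}{2}$ and all column sums the common value $c_2 = k\cdot\tfrac{nk}{2}$, satisfying condition (c) of Definition \ref{def:rect}. The key point is that this uniform shift sends the positive values $\{\tfrac{nk}{2}+1,\ldots,nk\}$ to $\{nk+1,\ldots,\tfrac{3nk}{2}\}$ and the negative values $\{-nk,\ldots,-\tfrac{nk}{2}-1\}$ to $\{-\tfrac{nk}{2},\ldots,-1\}$; after a final reduction I would instead subtract rather than add, arranging the uniform shift so that the image is exactly $\{0,1,\ldots,nk-1\}$ with every value attained once. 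Concretely, mapping each entry $y$ of $A$ to $y + \tfrac{nk}{2} - \tfrac12$ when $y>0$ and to $y + \tfrac{nk}{2} + \tfrac12$ when $y<0$ (for $nk$ even this half-integer bookkeeping is avoided by working directly from $A'$), one checks the images of the two blocks $\{\tfrac{nk}{2}+1,\ldots,nk\}$ and $\{-nk,\ldots,-\tfrac{nk}{2}-1\}$ tile $\{0,\ldots,nk-1\}$ bijectively, giving condition (b).

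The main obstacle, and the only place requiring care, is the exact choice of the uniform additive constant so that the two sign-blocks of $\E(A')$ interleave into precisely $\{0,1,\ldots,nk-1\}$ without collision, while simultaneously verifying that the shift does not depend on sign (so that line sums remain constant). Once the constant is pinned down, conditions (a), (b), (c) of Definition \ref{def:rect} all follow directly: (a) is inherited unchanged from $A$ since the skeleton is untouched, (b) is the tiling just described, and (c) is the constancy of $c_1$ and $c_2$ computed above. I would therefore organize the proof as: establish shiftability forces the entry set and parity; apply the sign-respecting shift $A \mapsto A \pm \tfrac{nk}{2}$ to separate the entries into two contiguous blocks; then apply a single sign-independent translation turning those blocks into $\{0,\ldots,nk-1\}$; and finally check the three conditions.
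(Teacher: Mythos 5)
Your overall strategy is correct and is, in substance, the argument the paper is relying on: the paper does not prove this lemma itself but simply points to the proof of \cite[Theorem 1.12]{MP2}, and the content of that proof is exactly the relabelling trick you describe --- since a shiftable array has $s/2$ positive and $s/2$ negative entries in each row (and $k/2$ of each in each column), one may add one constant to all positive entries and a \emph{different} constant to all negative entries and still obtain constant row and column sums. The one place where your write-up actually goes wrong is the final explicit map: sending $y\mapsto y+\tfrac{nk}{2}-\tfrac12$ for $y>0$ and $y\mapsto y+\tfrac{nk}{2}+\tfrac12$ for $y<0$ produces the half-integers $\{\tfrac12,\tfrac32,\ldots,nk-\tfrac12\}$, not $\{0,1,\ldots,nk-1\}$, and ``working directly from $A'$'' does not repair this, since a sign-independent translation of $A'=A\pm\tfrac{nk}{2}$ can never merge the two blocks $\{\tfrac{nk}{2}+1,\ldots,nk\}$ and $\{-nk,\ldots,-\tfrac{nk}{2}-1\}$ into a single interval. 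The clean choice is to map each positive entry $x$ to $x+\tfrac{nk}{2}-1$ and each negative entry $-x$ to $\tfrac{nk}{2}-x$; the positives then fill $\{\tfrac{nk}{2},\ldots,nk-1\}$, the negatives fill $\{0,\ldots,\tfrac{nk}{2}-1\}$, every row sum becomes $c_1=\tfrac{s}{2}(nk-1)$ and every column sum $c_2=\tfrac{k}{2}(nk-1)$, and conditions (a), (b), (c) of Definition \ref{def:rect} follow at once. The intermediate array $A\pm\tfrac{nk}{2}$ is an unnecessary detour. With that correction your proof is complete, and it has the merit of being self-contained where the paper merely cites an external reference.
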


\begin{proof}
See the proof of  \cite[Theorem 1.12]{MP2}.
\end{proof}

Thanks to the classification of the diagonal $\MR(a;b)$ given in \cite{rect1}, we get Theorem \ref{magic_rect}.

\begin{proof}[Proof of Theorem \ref{magic_rect}]
If $d$ is even, the result follows from Proposition  \ref{shiftSMA} and Lemma \ref{magic->rect}.
In \cite{rect1} it was proved that a diagonal $\MR(a;b)$ exists
for all $a\geq b\geq 3$ such that $a$ is odd or $b$ is even.
So, if $d\geq 3$ and $nk$ is odd, apply  Theorem \ref{magic sq->rt}, taking  $b=d$ and $a=\frac{nk}{d}$.
\end{proof}

In Figure \ref{fig3}, we give an $\MR(9,18;12,6)$ obtained by applying our reduction theorem to  a diagonal $\MR(18;6)$,
whose construction is described in the proof of \cite[Theorem 6]{rect1}.

\begin{figure}[ht]
\begin{footnotesize}
$$\begin{array}{|c|c|c|c|c|c|c|c|c|}\hline
0 & 34 & 36 & 70 & 106 & 75 &  &  & \\\hline
 & 1 & 33 & 37 & 69 & 104 & 77 &  & \\\hline
 &  & 2 & 32 & 38 & 68 & 102 & 79 & \\\hline
 &  &  & 3 & 31 & 39 & 67 & 100 & 81\\\hline
83 &  &  &  & 4 & 30 & 40 & 66 & 98\\\hline
96 & 85 &  &  &  & 5 & 29 & 41 & 65\\\hline
64 & 94 & 87 &  &  &  & 6 & 28 & 42\\\hline
43 & 63 & 92 & 89 &  &  &  & 7 & 27\\\hline
26 & 44 & 62 & 90 & 91 &  &  &  & 8\\\hline
9 & 25 & 45 & 61 & 88 & 93 &  &  & \\\hline
 & 10 & 24 & 46 & 60 & 86 & 95 &  & \\\hline
 &  & 11 & 23 & 47 & 59 & 84 & 97 & \\\hline
 &  &  & 12 & 22 & 48 & 58 & 82 & 99\\\hline
101 &  &  &  & 13 & 21 & 49 & 57 & 80\\\hline
78 & 103 &  &  &  & 14 & 20 & 50 & 56\\\hline
55 & 76 & 105 &  &  &  & 15 & 19 & 51\\\hline
52 & 54 & 74 & 107 &  &  &  & 16 & 18\\\hline
35 & 53 & 71 & 72 & 73 &  &  &  & 17\\\hline
  \end{array}$$ 
  \end{footnotesize}
 \caption{An $\MR(9,18;12,6)$.}\label{fig3}
\end{figure}

To conclude our paper, we show how the ideas of Section \ref{s0} can be adapted to construct signed magic arrays.
Also in this case, we prefer to work in the more general context of integer $\lambda$-fold relative Heffter arrays.

\begin{prop}
Let $m,n,s,k$ be four integers such that $2\leq s\leq n$, $2\leq k \leq m$ and $ms=nk$.
Let $\lambda$ be a divisor of $n$ such that $n\geq 3\lambda$. Suppose that $n,s,\lambda$ are even and 
such that $\frac{n}{\lambda}k\equiv 0,3\pmod 4$.
Then, there exists an integer ${}^\lambda\H_1(m,n;s,k)$.
\end{prop}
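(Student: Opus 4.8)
The plan is to interpret an integer ${}^\lambda\H_1(m,n;s,k)$ in the light of Definition \ref{LambdaHInt} and then build it by the same column-stacking strategy used in the proof of Theorem \ref{main_odd}. Here $\lambda$ divides $n$ (hence divides $2nk$) and $t=1$, so $J$ is trivial, $v=\frac{2nk}{\lambda}+1$, and $\ell=v$. Since $\frac{n}{\lambda}k\equiv 0,3\pmod 4$ forces $v=\frac{2nk}{\lambda}+1$ to be odd, condition (b) simplifies to: every element of $\Phi=\{1,2,\ldots,\frac{nk}{\lambda}\}$ appears, up to sign, exactly $\lambda$ times in the array. Thus the task is to fill an $m\times n$ array whose rows have $s$ entries, whose columns have $k$ entries, whose row and column sums vanish over $\Z$, and whose support is $[1,\frac{nk}{\lambda}]$ with each value used exactly $\lambda$ times (with signs).

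First I would isolate a single ``period'' of width $\frac{n}{\lambda}$. Because $\lambda\mid n$ and $n$ is even, set $N=\frac{n}{\lambda}$; the hypothesis reads $Nk\equiv 0,3\pmod 4$, which is exactly the integer Heffter condition for an $N$-column block. The idea is to construct a collection $\mathcal Z=(Z_0,Z_1,\ldots,Z_{N/4-1})$ of $k\times 4$ blocks, each with zero row and column sums and with combined support equal to $[1,Nk]$ (using each value once, up to sign), exactly as in the proof of Theorem \ref{main_odd}. Since $s$ is even one no longer needs $s\equiv 0\pmod 4$, so I would want a slightly more flexible block library than the $3\times 4$ pieces of Section \ref{s0}: when $k$ is even I can stack the shiftable $\H(4,4;4,4)$ and $\H(6,4;4,6)$ blocks $Q_4,Q_6$ (and their shifts $Q_4\pm x$) from that section, and when $k$ is odd I would reuse the sequences $\mathcal A_1,\mathcal A_2$ together with $V_h,W_h$. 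The point is only that a ``single-fold'' integer array $H_0$ of size $m\times N$ with support $[1,Nk]$ exists and can be assembled from these building blocks, placing the row-$a$, column-$b$ entry of $Z_i$ into cell $(a+ki,\,b+4i)$ of $H_0$.

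Next I would produce the $\lambda$-fold structure by juxtaposition. Having one integer Heffter array $H_0$ on the columns $[1,N]$ with support $[1,Nk]$ and vanishing line sums, I form the full $m\times n$ array $H$ by placing $\lambda$ side-by-side copies of $H_0$ in the $\lambda$ consecutive column-bands $[1,N],[N+1,2N],\ldots,[(\lambda-1)N+1,\lambda N]$. Each row then meets $\lambda$ bands and acquires $\lambda\cdot\frac{s}{\lambda}=s$ filled cells (one uses that each band contributes $s/\lambda$ entries per row, which holds because $mN\cdot k=\,m\cdot\frac{n}{\lambda}k$ distributes evenly — more precisely each band is itself a Heffter-type array whose row count $\frac{s}{\lambda}$ is an integer since $s$ is even and the block widths are arranged so that a row of $H_0$ carries $s/\lambda$ cells); each column lies in a single band and keeps its $k$ cells; and every value of $[1,Nk]$ now occurs exactly $\lambda$ times, once per band. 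Row and column sums remain zero because they did so in each copy. This realizes Definition \ref{LambdaHInt} with $t=1$.

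\textbf{The main obstacle} I anticipate is not the juxtaposition but the block bookkeeping: I must guarantee that a single-fold integer array on $N$ columns with exactly $s/\lambda$ filled cells per row and support $[1,Nk]$ genuinely exists for all admissible $k$ (both parities) under the sole divisibility constraints $Nk\equiv 0,3\pmod 4$ and $4\mid N$, and that $s/\lambda$ is a feasible row-occupancy for the chosen blocks. The parity arithmetic ($n,s,\lambda$ even, $\lambda\mid n$, $N=n/\lambda$) needs to be checked to ensure $4\mid N$ or, failing that, to widen the block set to $k\times (N\bmod 4\text{-adjusted})$ pieces; handling the residue of $N$ modulo $4$ and the residue of $k$ modulo $4$ simultaneously is the delicate case analysis. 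Once the block catalogue is verified to cover every residue pair compatible with $Nk\equiv 0,3\pmod4$, the support computations are the same telescoping interval unions already carried out for $\mathcal A_1(\mu)$ and $\mathcal A_2(\nu)$, and the proof closes.
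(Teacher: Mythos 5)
Your proposal has a genuine gap at its core. The juxtaposition step assumes that each of the $\lambda$ column-bands of width $N=\frac{n}{\lambda}$ contributes $\frac{s}{\lambda}$ filled cells to every row, which requires $\lambda \mid s$; but the hypotheses only give that $s$ and $\lambda$ are even. For instance $\lambda=4$, $n=12$ (so $N=3$), $k=4$, $s=6$, $m=8$ satisfies every hypothesis of the proposition, yet $\frac{s}{\lambda}=\frac{3}{2}$, so no band decomposition of this kind exists. Even when $\lambda\mid s$, the band $H_0$ you need is an integer $\H\left(m,\frac{n}{\lambda};\frac{s}{\lambda},k\right)$ — a rectangular integer Heffter array whose existence is essentially the open problem the paper is attacking — and the Section \ref{s0} block catalogue you invoke requires $4\mid \frac{n}{\lambda}$ and $k$ odd, $k\neq 5$ (or the $Q_4,Q_6$ stacks, which impose their own congruences); none of this is guaranteed here, and indeed $N=3$ in the example above. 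You correctly flagged this bookkeeping as the main obstacle, but it is not a technicality that closes: it is the reason this route does not work.

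The paper avoids all of this with a different and much lighter construction. It takes a \emph{tight} integer Heffter array $H=\H\left(k,\frac{n}{\lambda};\frac{n}{\lambda},k\right)$, which exists by \cite{ABD} exactly under the hypothesis $\frac{n}{\lambda}k\equiv 0,3\pmod 4$, and from each column $C_i$ of $H$ forms the $k\times 2$ block $F_i=\begin{array}{|c|c|}\hline C_i & -C_i\\\hline\end{array}$. Each $F_i$ automatically has zero row sums (each row is $c,-c$) and zero column sums (inherited from $H$). Concatenating $\frac{\lambda}{2}$ copies of $(F_1,\ldots,F_{\frac{n}{\lambda}})$ gives $\frac{n}{2}$ blocks in which every element of $[1,\frac{nk}{\lambda}]$ appears $\lambda$ times up to sign, and placing block $B_i$ at cells $(a+ki,b+2i)$ gives each row $s/2$ blocks of width $2$ — using only that $s$ is even, not $\lambda\mid s$. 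The column-negation pairing is the key idea your proposal is missing; without it, or some substitute mechanism for achieving $\lambda$-fold multiplicity and zero row sums simultaneously, the argument does not go through.
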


\begin{proof}
Let $H$ be an integer $\H\left(k,\frac{n}{\lambda};\frac{n}{\lambda},k\right)$, and call $C_1,C_2,\ldots,C_{\frac{n}{\lambda}}$ its columns.
By \cite{ABD}, $H$ exists since $\frac{n}{\lambda}\geq 3$ and $\frac{n}{\lambda}k\equiv 0,3\pmod 4$.
For any $i=1,\ldots,\frac{n}{\lambda}$, define the $k\times 2$ block
$F_i=\begin{array}{|c|c|}\hline
C_{i} & -C_{i}\\\hline
     \end{array}$.
Hence, each block $F_i$ has rows and columns that sum to zero.     
Now, take the sequence $\B$ obtained by concatenating $\frac{\lambda}{2}$ copies of the sequence $(F_1,F_2,\ldots,F_{\frac{n}{\lambda}})$.
The sequence $\B$  has length $\frac{n}{2}$ and is such that each of the elements in $[1,k \frac{n}{\lambda}]$ appears in 
$\E(\B)$ exactly $\frac{\lambda}{2}$ times with sign $+$ and $\frac{\lambda}{2}$ times with sign $-$.
Write $\B=(B_0,B_1,\ldots,B_{\frac{n-2}{2}})$.
To obtain an integer ${}^\lambda\H_1(m,n;s,k)$, we  fill the cell $(a+ki, b+2i)$ of an array $A$ of size $m\times n$ with the 
element of the cell $(a,b)$ of block $B_i$, for all $i=0,\ldots,\frac{n-2}{2}$.
Also in this case, every column of $A$ contains exactly $k$ filled cells (a column of a unique block $B_i$),
and every row of $A$ contains $s$ filled cells (a row from each $\frac{s}{2}$ distinct blocks $B_j$),
since $k\frac{n}{2}=m \frac{s}{2}$.
\end{proof}

In general, a ${}^2\H_1(m,n;s,k)$ is not necessarily a signed magic array.
However, by the particular shape of the pf array constructed in the proof of the previous proposition,
for $\lambda=2$ we get the following.

\begin{cor}\label{ultimo}
Let $m,n,s,k$ be four integers such that $3\leq s\leq n$, $3\leq k \leq m$ and $ms=nk$.
If $n$ and $s$ are even and $nk\equiv 0,6\pmod 8$, then there exists an $\SMA(m,n;s,k)$.
\end{cor}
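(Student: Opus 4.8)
The plan is to apply the previous proposition with $\lambda=2$ and then to observe that the integer ${}^2\H_1(m,n;s,k)$ it produces is in fact a signed magic array, the point being the special block shape used in that construction.

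First I would verify the hypotheses of the previous proposition for $\lambda=2$. The divisibility $\lambda\mid n$ and the parity conditions ``$n,s,\lambda$ even'' are immediate, and the required congruence $\tfrac{n}{\lambda}k\equiv 0,3\pmod 4$ is exactly a restatement of $nk\equiv 0,6\pmod 8$: if $nk\equiv 0\pmod 8$ then $\tfrac{nk}{2}\equiv 0\pmod 4$, while if $nk\equiv 6\pmod 8$ then $\tfrac{nk}{2}\equiv 3\pmod 4$. The only condition needing attention is $n\geq 3\lambda=6$. Since $n$ is even and $3\leq s\leq n$ with $s$ even, we have $n\geq 4$; and if $n=4$ then necessarily $s=4$, hence $m=k$, and $nk=4k\equiv 0,6\pmod 8$ forces $k$ to be even. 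In that boundary case $s$ and $k$ are both even, so $\gcd(s,k)\geq 2$ and an $\SMA(m,n;s,k)$ already exists by Theorem \ref{magic}. We may therefore assume $n\geq 6$ and apply the previous proposition to obtain an integer ${}^2\H_1(m,n;s,k)$, which I call $A$.

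Next I would read off the entries of $A$ from Definition \ref{LambdaHInt}. Because $nk$ is even, the modulus $v=nk+1$ is odd, so $\Phi=\{1,2,\ldots,nk/2\}$ and every element of $\Phi$ appears in $A$, up to sign, exactly twice. The decisive step is to promote this into the balanced-sign condition of a signed magic array, and here I would return to the explicit construction: $A$ is assembled from $k\times 2$ blocks $[\,C_i\mid -C_i\,]$, where $C_1,\ldots,C_{n/2}$ are the columns of an integer $\H(k,n/2;n/2,k)$. Since each such block places an entry $x$ immediately next to its opposite $-x$, every $x\in\{1,\ldots,nk/2\}$ occurs in $A$ exactly once with the sign $+$ and exactly once with the sign $-$; equivalently, each element of $\Omega=\{\pm1,\pm2,\ldots,\pm nk/2\}$ occurs exactly once. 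Combined with the cell counts ($s$ per row, $k$ per column) and the vanishing of all row and column sums, both inherited verbatim from the ${}^2\H_1$ structure, this verifies every clause of Definition \ref{def:Magic}, so $A$ is an $\SMA(m,n;s,k)$.

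I expect the main obstacle to be exactly this last passage. A generic integer ${}^2\H_1(m,n;s,k)$ is \emph{not} a signed magic array, because the two occurrences of a given absolute value may carry the same sign (producing two copies of $+x$ and none of $-x$); thus no argument using only the abstract relative-Heffter-array axioms can work, and one is forced to use the particular $[\,C_i\mid -C_i\,]$ shape to guarantee that positive and negative occurrences are perfectly paired. The only other point deserving care is the degenerate case $n=4$, which I would dispatch through Theorem \ref{magic} as indicated above.
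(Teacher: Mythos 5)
Your proof is correct and takes essentially the same approach as the paper: the corollary is obtained by applying the preceding proposition with $\lambda=2$ and observing that the $[\,C_i\mid -C_i\,]$ block shape forces each absolute value to occur exactly once with each sign, which is precisely the remark the paper makes before stating the result. Your explicit treatment of the boundary case $n=4$ (where the proposition's hypothesis $n\geq 3\lambda$ would fail) is a careful addition that the paper leaves implicit.
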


In Figure \ref{fig4} we give an $\SMA(20,8;6,15)$ obtained by following the previous construction.

\begin{figure}[ht]
\begin{footnotesize}
$$\begin{array}{|c|c|c|c|c|c|c|c|}\hline
   7 & -7 & -34 & 34 & -32 & 32 &  & \\\hline
-12 & 12 & 33 & -33 & -47 & 47 &  & \\\hline
-2 & 2 & -38 & 38 & 48 & -48 &  & \\\hline
6 & -6 & 37 & -37 & 51 & -51 &  & \\\hline
1 & -1 & 43 & -43 & -52 & 52 &  & \\\hline
21 & -21 & -41 & 41 &  &  & 19 & -19\\\hline
-22 & 22 & -54 & 54 &  &  & -20 & 20\\\hline
25 & -25 & 53 & -53 &  &  & 11 & -11\\\hline
-26 & 26 & 58 & -58 &  &  & -14 & 14\\\hline
-29 & 29 & -57 & 57 &  &  & 4 & -4\\\hline
31 & -31 &  &  & -10 & 10 & 36 & -36\\\hline
45 & -45 &  &  & 17 & -17 & -35 & 35\\\hline
-46 & 46 &  &  & -18 & 18 & 40 & -40\\\hline
-49 & 49 &  &  & 3 & -3 & -39 & 39\\\hline
50 & -50 &  &  & 8 & -8 & -44 & 44\\\hline
 &  & -16 & 16 & -23 & 23 & 42 & -42\\\hline
 &  & 15 & -15 & 24 & -24 & 56 & -56\\\hline
 &  & 9 & -9 & -27 & 27 & -55 & 55\\\hline
 &  & 5 & -5 & 28 & -28 & -60 & 60\\\hline
 &  & -13 & 13 & 30 & -30 & 59 & -59\\\hline
  \end{array}$$
\end{footnotesize}
 \caption{An $\SMA(20,8;6,15)$.}\label{fig4}
\end{figure}

Now, we show how to construct an $\SMA(m,n;s,k)$ when $k$ is odd and $s\equiv 0 \pmod 4$.
It will be useful to introduce the following notation: if $0\leq a<b $, then $\pm [a,b]$ denotes the set $[-b,-a]\cup [a,b]$.
As done in Section \ref{s0}, we begin considering the case $k=3$. 
Working with \emph{signed} Skolem sequences \cite[Theorem 3.1]{BJ}, we produce $3\times 4$ 
blocks whose rows and columns sum to zero. In fact, for any fixed integer $\mu\geq 0$, we can define:
$$\begin{array}{rcl}
A & =& \begin{array}{|c|c|c|c|}\hline
1 &  2 &  -2 &  -1 \\ \hline
 4\mu+3 &  -(4\mu+6) &  -(12\mu+3) & 12\mu+6 \\\hline
 -(4\mu+4) &  4\mu+4 &  12\mu+5 &   -(12\mu+5) \\ \hline
\end{array}\;,\\[14pt]
B_a   & =& \begin{array}{|c|c|c|c|}\hline
 4a+3 &  4a+5 &  -(4a+5) &  -(4a+3) \\ \hline
  4\mu+4a+5 &  -(4\mu+8a+12) &  -(12\mu-8a-3) &  12\mu-4a+4  \\ \hline
 -(4\mu+8a+8) &  4\mu+4a+7 &  12\mu-4a+2 &  -(12\mu-8a+1)  \\ \hline
  \end{array}\;,\\[14pt]
C_a & =& \begin{array}{|c|c|c|c|}\hline
 4a+4 & 4a+6 & -(4a+6)& -(4a+4)  \\ \hline
 4\mu+4a+6 &  -(4\mu+8a+14) & -(12\mu-8a-5) &  12\mu-4a+3 \\ \hline
 -(4\mu+8a+10) & 4\mu+4a+8 &  12\mu-4a+1 &  -(12\mu-8a-1) \\ \hline
\end{array}\;, 
 \end{array}$$
with $a\in [0,\mu-1]$. The sequence
\begin{equation}\label{mseqU1}
\mathcal{A}(\mu) = \left\{
\begin{array}{ll}
(A) & \textrm{if } \mu=0,\\
(A,B_0,B_1,\ldots,B_{\mu-1}, C_0, C_1,\ldots,C_{\mu-1})&  \textrm{if } \mu\geq 1
\end{array}\right.
\end{equation}
has length $2\mu+1$ and $\E(\mathcal{A}(\mu))=\pm [1,12\mu+6]$.
In fact, 
$$\begin{array}{rcl}
\E(A) & =& \{-(12\mu+5),-(12\mu+3)\}\cup \{-(4\mu+6), -(4\mu+4)\}\cup [-2,-1]\cup [1,2]\\
&&\cup [4\mu+3,4\mu+4]\cup [12\mu+5,12\mu+6]; \\
\bigcup\limits_{a=0}^{\mu-1} \E(B_a) & = & \{-(12\mu+4), -12\mu,\ldots, -(4\mu+8) \}\cup \{-(12\mu+1),-(12\mu-3),\ldots,\\
&&-(4\mu+5)\}\cup \{-(4\mu+1), -(4\mu-1), \ldots,-3\}\cup \{3,5,\ldots,4\mu+1\} \\
&& \cup \{4\mu+5, 4\mu+7, \ldots, 8\mu+3\} \cup \{8\mu+6,8\mu+8,\ldots,12\mu+4\};\\
\end{array}$$
$$\begin{array}{rcl}
\bigcup\limits_{a=0}^{\mu-1} \E(C_a) & = & \{-(12\mu+6), -(12\mu+2), \ldots,-(4\mu+10)\}\cup 
\{-(12\mu-1), -(12\mu-5), \\
&&\ldots, -(4\mu+3) \} \cup
\{-(4\mu+2), -4\mu,\ldots,-4\} \cup  \{4,6,\ldots,4\mu+2\}\cup \\
&&\{4\mu+6,4\mu+8,\ldots,8\mu+4\}\cup \{8\mu+5,8\mu+7,\ldots,12\mu+3\}.
\end{array}$$

\begin{proof}[Proof of Theorem \ref{magic4}]
If $k$ is even, the statement follows from Theorem \ref{magic}. 
So, assume $k$ odd: from the hypothesis $ms=nk$ we obtain that $n\equiv 0 \pmod 4$.
Furthermore, if $n\equiv 0 \pmod 8$, the statement follows from Corollary \ref{ultimo}.
So, we may also assume $n\equiv 4 \pmod 8$.

Our first step is to construct a sequence $\mathcal{Z}=(Z_0,Z_1,\ldots,Z_{\frac{n-4}{4}})$ of length $\frac{n}{4}$,
such that its elements $Z_i$ are $k\times 4$ blocks whose rows and columns sum to zero, and such that $\E(\mathcal{Z})=
\pm [1,\frac{nk}{2}]$.
To this purpose, take the sequence $\B=\mathcal{A}\left(\frac{n-4}{8}\right)$ and write
$\B=(U_0,U_1,\ldots,U_{\frac{n-4}{4}})$. Note that  $\E(\mathcal{B})=\pm  [1,\frac{3n}{2}]$.
If $k=3$, set $Z_i=U_i$ for all $i=0,\ldots,\frac{n-4}{4}$. It is clear that $\mathcal{Z}$ satisfies all our requirements.
Assume $k\geq 5$. To obtain our block $Z_i$ we use the block $U_i$ for the first three rows, and we need a shiftable 
$\SMA(k-3,4;4,k-3)$
to construct the remaining $k-3$ rows. So, consider the following shiftable block:
$$Q=\begin{array}{|c|c|c|c|}\hline
1 & -2 & -3 & 4 \\ \hline
-1 & 2 & 3 & -4 \\ \hline
    \end{array}.$$
For all fixed $h\geq 0$ we take the $(2h+2)\times 4$ block $V_h$:
$$V_h=\begin{array}{|c|} \hline
Q \\\hline
Q\pm 4 \\\hline
Q\pm 8 \\\hline
\vdots \\ \hline
Q \pm 4h   \\ \hline
\end{array}.$$
The block $V_h$  has rows and columns that sum to zero; furthermore, $\E(V_h)=\pm  [1,4h+4]$.
Write $k=2q+5$  and define
$Z_i=\begin{array}{|c|}\hline U_i \\\hline V_q\pm (\frac{3n}{2}+(4q+4)i)  \\\hline\end{array}$.
Note that 
$$\begin{array}{rcl}
\bigcup\limits_{i=0}^{\frac{n-4}{4}}\E\left(V_q\pm (\frac{3n}{2}+(4q+4)i) \right) & =& 
\bigcup\limits_{i=0}^{\frac{n-4}{4}} \pm[\frac{3n}{2}+1+4(q+1)i, \frac{3n}{2}+4(q+1)(i+1) ]\\
& =& \pm[\frac{3n}{2}+1,\frac{3n}{2}+(q+1)n]=\pm[\frac{3n}{2}+1,\frac{nk}{2}].
     \end{array}$$
We conclude that $\E(\mathcal{Z})=\pm [1,\frac{nk}{2}]$.

Hence, for all $k$ odd, we were able to construct the required sequence $\mathcal{Z}$. 
To obtain an $\SMA(m,n;s,k)$, once again, we  fill the cell $( a+ki, b+4i)$ of an array $H$ of size $m\times n$ with the 
element of the cell $(a,b)$ of $Z_i$, for all $i=0,\ldots,\frac{n-4}{4}$.
Clearly, every column of $H$ contains exactly $k$ filled cells (a column of a unique block $Z_i$),
and every row of $H$ contains $s$ filled cells (a row from each $\frac{s}{4}$ distinct blocks $Z_j$),
as $k\frac{n}{4}=m \frac{s}{4}$.
Hence, 
$\E(H)=\E(\mathcal{Z})=\pm[1,\frac{nk}{2}]$ and, since the elements of each row/column of $Z_i$ sum to $0$,
the same holds for $H$. 
\end{proof}

\end{document}